\theoremstyle{plain}
\newtheorem{theorem}{Theorem}
\newtheorem{corollary}{Corollary}
\newtheorem{lemma}{Lemma}
\theoremstyle{definition}
\newtheorem{definition}{Definition}
\DeclareMathOperator{\Tr}{Tr}
\DeclareMathOperator{\End}{End}
\DeclareMathOperator{\GL}{GL}
\DeclareMathOperator{\Sp}{Sp}
\DeclareMathOperator{\SL}{SL}
\begin{document}
\title[Solvable groups acting on symplectic spreads]
{On the non-existence of certain transitive\\
 actions of solvable groups on symplectic spreads}

\author[Rod Gow]{Rod Gow}
\thanks{School of Mathematics,
University College,
Belfield, Dublin 4,
Ireland,
\emph{E-mail address:} \texttt{rod.gow@ucd.ie}}

\keywords{}
\subjclass{}
\begin{abstract} 
We prove the following result. Let $q$ be a power of an odd prime and let $\Sp(2m,q)$ denote the symplectic group of degree $2m$ over $\mathbb{F}_q$. Then if $q\equiv 1\bmod 4$, no solvable subgroup of $\Sp(2m,q)$ acts transitively on a complete symplectic spread defined on the underlying
vector space of dimension $2m$, unless $m=1$ and $q=5$. The solvable group $\Sp(2,3)$ of order 24 acts transitively on a complete symplectic spread defined on a two-dimensional vector space over $\mathbb{F}_5$. By contrast, when $q\equiv 3\bmod 4$, there is a metacyclic group of order $2m(q^m+1)$ that acts transitively on a complete symplectic spread.

\end{abstract}
\maketitle

 \section{Introduction}
\noindent 
Let $p$ be an odd prime and let $q=p^a$, where $a$ is a positive integer. Let $\mathbb{F}_q$ denote
the finite field of order $q$. 
Let $V$ be a vector space of finite dimension $n$ over $\mathbb{F}_q$. We let $\End(V)$ denote the set of all $\mathbb{F}_q$-linear endomorphisms of $V$ and $\GL(V)$ the group of all invertible elements of $\End(V)$. We also use the notation $\GL(n,q)$ in place of $\GL(V)$, since the isomorphism type of
$\GL(V)$ depends only on $n$ and $q$. 

Suppose now that $n=2m$ is even and that $f:V\times V\to \mathbb{F}_q$ is a non-degenerate alternating bilinear form. 
We say that a subspace $W$ of $V$ is totally isotropic with respect to $f$ if $f(w,w')=0$ for all $w$ and $w'$ in $W$. Given the hypothesis that $f$ is non-degenerate, the largest dimension of a subspace of $V$ that is totally isotropic with respect to $f$ is $m$, and moreover, totally isotropic subspaces of $V$ of dimension $m$ exist. 

We call a collection $\Omega$ of $q^m+1$ $m$-dimensional totally isotropic subspaces $U_i$ of $V$
a \emph{complete symplectic spread} of $V$ if $V$ is the union of the $U_i$
and the $U_i$ intersect trivially in pairs. 

Let us now observe that, since we are working over a finite field, $V$ certainly contains a complete
symplectic spread. This may be proved by the process of field reduction in the following manner. 
Let $X$ be a two-dimensional vector space over the field $\mathbb{F}_{q^m}$ and let 
$F: X\times X \to \mathbb{F}_{q^m}$ be a non-degenerate alternating bilinear form. $X$ has precisely
$q^m+1$ one-dimensional subspaces and it is the union of these subspaces. Since one-dimensional
subspaces are totally isotropic with respect to an alternating bilinear form, and they clearly intersect trivially in pairs, $X$ has a complete symplectic spread.

Let $X'$ denote $X$ considered as vector space of dimension $2m$ over $\mathbb{F}_q$ and let $\Tr:\mathbb{F}_{q^m}\to \mathbb{F}_q$ denote the trace form. We define a bilinear form $g:X' \times X'\to
\mathbb{F}_q$ by
\[
g(u,v)=\Tr(F(u,v))
\]
for all $u$ and $v$ in $X$. It is straightforward to prove that $g$ is alternating and non-degenerate.

Since a one-dimensional $\mathbb{F}_{q^m}$-subspace of $X$ becomes an 
$\mathbb{F}_q$-subspace of $V'$ that is $m$-dimensional and totally isotropic with respect to $g$, it follows that $X'$ possesses a complete symplectic spread with respect to $g$. Finally, we identify $V$ with $X'$.
Since all non-degenerate alternating bilinear forms defined on $V\times V$ are equivalent, it follows that $V$ has a complete symplectic spread with respect to $f$.

The spread that we have constructed has an additional property that we wish to consider 
in this paper. Recall that an isometry of $f$ is an element $\sigma$, say, of $\GL(V)$
that satisfies 
\[
f(\sigma u,\sigma v)=f(u,v)
\]
for all $u$ and $v$ in $V$. The isometries of $f$ form a group, called the isometry group of $f$. Since, as we have already remarked, all non-degenerate alternating bilinear forms defined on $V\times V$ are equivalent, it follows that
the isometry groups of all non-degenerate alternating bilinear forms on $V\times V$ are isomorphic.
Any such isometry group is called the symplectic group of degree $2m$ over $\mathbb{F}_q$ and is denoted
by $\Sp(V)$ or by $\Sp(2m,q)$. 

The isometry group of the alternating bilinear form $F$ defined on $X\times X$ is identical
with the special linear group $\SL(2,q^m)$ of all elements of $\GL(2,q^m)$
of determinant 1. $\SL(2,q^m)$ thus acts as a group
of isometries of the alternating bilinear form $g$ defined on $X'\times X'$, and  may
be considered as a subgroup of $\Sp(2m,q)$. 

Now $\SL(2,q^m)$ acts doubly transitively on the $q^m+1$ one-dimensional subspaces of $X$. It therefore
acts doubly transitively on the complete symplectic spread defined on $X'\times X'$ (with respect to
$g$). By the identification process outlined above, we can say $V$ possesses a complete symplectic spread
whose subspaces are permuted doubly transitively by a subgroup of the isometry group of $f$, this subgroup being isomorphic to $\SL(2,q^m)$. 

Assuming, as before, that $q$ is odd, we note that $\SL(2,q^m)$ is solvable only when $m=1$ and $q=3$. 
We wish to investigate the following question suggested by these observations. Does $V$ possess a complete symplectic spread with respect to some non-degenerate alternating bilinear form $f$ 
whose subspaces are transitively permuted by a solvable subgroup of isometries of $f$? We do not seek a doubly transitive permutation action, as this is much too restrictive. 

It is not difficult to show that the answer is yes when $q\equiv 3\bmod 4$. Indeed, by means of a Galois-theoretic construction, we may show that there is metacyclic group that transitively permutes a complete
symplectic spread on $V$. This construction fails when $q\equiv 1 \bmod 4$ and this failure leads us to ask if there are any transitive actions of solvable groups in this case. The purpose of this paper is to show that with one exception, there are no such transitive actions. Indeed, our main theorem is that if $q\equiv 1\bmod 4$, the only solvable subgroup of $\Sp(2m,q)$ that acts transitively on a complete symplectic spread on $V$ is $\Sp(2,3)=\SL(2,3)$
and this action occurs only when $m=1$ and $q=5$.

\section{Zsigmondy prime divisors}

\noindent  Much of our opening material is well known and has been used in numerous investigations of subgroups of $\GL(n,q)$ and $\Sp(2m,q)$. To keep the document reasonably self-contained, we have provided complete proofs of many of the theorems we need, although we have to appeal to the proofs of a few more complicated results.

We recall that a prime rational integer $r$ is said to be a Zsigmondy prime divisor
of $q^i-1$ if $r$ divides $q^i-1$ but $r$ does not divide $q^j-1$ whenever $1\leq j<i$. The theorem of
Zsigmondy shows that Zsigmondy prime divisors of $q^i-1$ always exist provided $i>2$ and $q$ is odd. See, for example, Theorem 8.3, p.508, of \cite{Hup2}.
It is straightforward to see that a Zsigmondy prime divisor of $q^i-1$ is greater than $i$. For if $r$ is a Zsigmondy prime divisor of $q^i-1$, $q$ has order $i$ modulo $r$ and hence $i$ divides
$r-1$.

Suppose now that $r$ is a Zsigmondy prime divisor of $q^{2m}-1=(q^m-1)(q^m+1)$. Then $r$ divides one of
$q^m-1$ and $q^m+1$, and by the Zsigmondy prime property, $r$ divides $q^m+1$. 

Let us note one case where there is no shortage of Zsigmondy prime divisors of $q^{2m}-1$.

\begin{lemma} \label{power_of_two}
Let $a$ be a positive integer. Then any odd prime divisor of $q^{2^a}+1$ is a Zsigmondy prime divisor
of $q^{2^{a+1}}-1$. 
\end{lemma}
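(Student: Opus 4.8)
The plan is to work with the multiplicative order of $q$ modulo $r$. Let $r$ be an odd prime divisor of $q^{2^a}+1$. First I would record two immediate facts: $r$ does not divide $q$ (otherwise $r$ would divide $q^{2^a}$ and hence the difference $1=(q^{2^a}+1)-q^{2^a}$), so $q$ is a unit modulo $r$ and has a well-defined multiplicative order $d$; and $q^{2^a}\equiv -1\pmod r$ since $r\mid q^{2^a}+1$. Also $r$ divides $q^{2^{a+1}}-1=(q^{2^a}-1)(q^{2^a}+1)$, so $r$ is at least a divisor of $q^{2^{a+1}}-1$; the content of the lemma is that it is a \emph{Zsigmondy} divisor, i.e.\ that $d=2^{a+1}$.

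Next I would pin down $d$. Squaring the congruence $q^{2^a}\equiv -1$ gives $q^{2^{a+1}}\equiv 1\pmod r$, so $d\mid 2^{a+1}$. The key observation is that every divisor of a power of $2$ is again a power of $2$, so $d\in\{1,2,\dots,2^{a+1}\}$, and in particular either $d=2^{a+1}$ or $d\mid 2^a$. In the latter case $q^{2^a}\equiv 1\pmod r$; combining with $q^{2^a}\equiv -1\pmod r$ yields $2\equiv 0\pmod r$, contradicting the hypothesis that $r$ is odd. Hence $d=2^{a+1}$.

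Finally, from $d=2^{a+1}$ I conclude that $r$ does not divide $q^j-1$ for any $j$ with $1\le j<2^{a+1}$, since $q^j\equiv 1\pmod r$ forces $d\mid j$. Together with $r\mid q^{2^{a+1}}-1$, this is precisely the statement that $r$ is a Zsigmondy prime divisor of $q^{2^{a+1}}-1$, completing the proof.

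There is no real obstacle here: the argument is entirely elementary, and the only point that requires a moment's care is the step exploiting that the divisors of $2^{a+1}$ are totally ordered by divisibility, which is what rules out all orders strictly between $2^a$ and $2^{a+1}$ and forces the jump all the way up to $2^{a+1}$.
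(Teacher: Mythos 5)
Your proof is correct and follows essentially the same route as the paper: both compute the multiplicative order of $q$ modulo $r$, use $q^{2^a}\equiv -1\pmod r$ together with the oddness of $r$ to rule out any proper divisor of $2^{a+1}$ as the order, and conclude the Zsigmondy property. The only difference is that you spell out the routine details (why $r\nmid q$, why order exactly $2^{a+1}$ gives the Zsigmondy condition) that the paper leaves implicit.
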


\begin{proof}
Let $r$ be an odd prime divisor of $q^{2^a}+1$ and let $i$ be the order of $q$ modulo $r$. Then certainly $i$ divides $2^{a+1}$, since $r$ divides $q^{2^{a+1}}-1$. Suppose if possible that $i$ is less than
$2^{a+1}$. It follows that $i$ is a divisor of $2^a$ and therefore $q^{2^a}\equiv 1\bmod r$. But since
$r$  divides $q^{2^a}+1$, $q^{2^a}\equiv -1\bmod r$. This is a contradiction, and we deduce that
$q$ has order $2^{a+1}$ modulo $r$. It is immediate from this fact that $r$ is a Zsigmondy divisor
of $q^{2^{a+1}}-1$. 
\end{proof}

\begin{definition}
Let $\sigma$ be an element of $\End(V)$ and let
\[
\mathcal{C}(\sigma)=\{ \tau\in \End(V): \tau\sigma=\sigma\tau\}
\]
and 
\[
\mathcal{P}(\sigma)=\{ a(\sigma): a(x)\in \mathbb{F}_q[x]\}.
\]
\end{definition}

Thus $\mathcal{C}(\sigma)$ consists of all elements of $\End(V)$ that commute with $\sigma$. It is an
$\mathbb{F}_q$-subalgebra of $\End(V)$. $\mathcal{P}(\sigma)$
consists of all polynomials in $\sigma$ and it is a subalgebra of $\mathcal{C}(\sigma)$.

The following is a well known result of linear algebra, whose proof we omit.

\begin{lemma} \label{dimension_of_algebra}
Let $\sigma$ be an element of $\End(V)$ and let $d$ be the degree of its minimal polynomial. Then
$\mathcal{P}(\sigma)$ has dimension $d$.
\end{lemma}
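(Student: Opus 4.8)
The plan is to identify $\mathcal{P}(\sigma)$ with a quotient of a polynomial ring and read off its dimension from the division algorithm. First I would introduce the evaluation map $\phi\colon \mathbb{F}_q[x]\to\End(V)$ given by $\phi(a(x))=a(\sigma)$. This is manifestly an $\mathbb{F}_q$-algebra homomorphism, and by definition its image is exactly $\mathcal{P}(\sigma)$. Its kernel is the set of all polynomials annihilating $\sigma$; by the defining property of the minimal polynomial $m_\sigma(x)$ together with polynomial division in $\mathbb{F}_q[x]$, this kernel is precisely the principal ideal $(m_\sigma(x))$. The first isomorphism theorem then yields an $\mathbb{F}_q$-linear (indeed algebra) isomorphism $\mathcal{P}(\sigma)\cong \mathbb{F}_q[x]/(m_\sigma(x))$.

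Next I would compute $\dim_{\mathbb{F}_q}\mathbb{F}_q[x]/(m_\sigma(x))$ directly. Given $a(x)\in\mathbb{F}_q[x]$, write $a(x)=m_\sigma(x)b(x)+r(x)$ with $r=0$ or $\deg r<d$; this shows that the residue classes of $1,x,\dots,x^{d-1}$ span the quotient. They are linearly independent over $\mathbb{F}_q$, for a nontrivial linear relation among them would produce a nonzero polynomial of degree less than $d$ lying in $(m_\sigma(x))$, hence divisible by the degree-$d$ polynomial $m_\sigma(x)$, which is impossible. Thus the quotient — and therefore $\mathcal{P}(\sigma)$ — has dimension $d$, with $\{1,\sigma,\dots,\sigma^{d-1}\}$ a basis.

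One could equally dispense with the quotient language and argue in place: the powers $1,\sigma,\dots,\sigma^{d-1}$ span $\mathcal{P}(\sigma)$ because dividing any polynomial by $m_\sigma$ and using $m_\sigma(\sigma)=0$ reduces every $\sigma^k$ with $k\ge d$ to a combination of lower powers, and the same powers are independent by minimality of $d$. There is no real obstacle here — the argument is wholly standard, which is presumably why the source states the lemma without proof; the only points requiring mild care are that $m_\sigma(x)$ is monic, so that division by it behaves as expected, and that the kernel of $\phi$ is genuinely all of $(m_\sigma(x))$ rather than a larger ideal, which is just the statement that $m_\sigma$ divides every annihilating polynomial.
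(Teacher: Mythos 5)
Your proof is correct and complete: the identification $\mathcal{P}(\sigma)\cong \mathbb{F}_q[x]/(m_\sigma(x))$ via the evaluation map, followed by the division-algorithm computation showing $1,\sigma,\dots,\sigma^{d-1}$ form a basis, is exactly the standard argument. The paper explicitly omits the proof of this lemma as well known, so there is nothing in the source to compare against; your write-up simply supplies the expected textbook justification, with the right points of care (monicity of $m_\sigma$ and the kernel being precisely $(m_\sigma(x))$) noted.
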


We proceed to the proof of another well known result.

\begin{lemma} \label{irreducible_action}
Let $r$ be a Zsigmondy prime divisor of $q^{n}-1$ and let $\sigma$ be a non-identity
element of $\GL(V)$ of order a power of $r$. Then the cyclic subgroup generated by $\sigma$ acts irreducibly on $V$
and the minimal polynomial of $\sigma$ is irreducible of degree $n$.
\end{lemma}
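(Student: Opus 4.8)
The plan is to exploit the Zsigmondy property of $r$ together with the fact that $q$ has multiplicative order exactly $n$ modulo $r$. First I would argue that $\sigma$ acts irreducibly. Suppose not; then $V$ has a proper nonzero $\sigma$-invariant subspace $W$, and since $\langle\sigma\rangle$ is an $r$-group acting on the $\mathbb{F}_q$-vector space $V$, we can decompose things into composition factors each of dimension $d_1,\ldots,d_k$ with $\sum d_i = n$ and each $d_i < n$. On each irreducible constituent, $\sigma$ acts as an element of $\GL(d_i,q)$, so its order divides $|\GL(d_i,q)| = q^{d_i(d_i-1)/2}\prod_{j=1}^{d_i}(q^j-1)$. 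Since $\sigma$ has order a power of the odd prime $r$ (and $r \nmid q$ as $r \mid q^n-1$), the order of $\sigma$ on that constituent divides $\prod_{j=1}^{d_i}(q^j-1)$; but $r$ divides none of $q^j-1$ for $j \le d_i < n$ by the Zsigmondy property, so $\sigma$ acts trivially on every composition factor. An $r$-element acting trivially on all composition factors of a module over a field of characteristic coprime to $r$ is itself trivial (its order is a power of $r$, yet it is unipotent, forcing order a power of $p$; as $r \ne p$ the element is the identity). This contradicts $\sigma \ne 1$, so $\langle\sigma\rangle$ acts irreducibly on $V$.

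Next I would pin down the minimal polynomial. Since $\langle\sigma\rangle$ acts irreducibly, $V$ is a simple $\mathbb{F}_q[\sigma]$-module, hence $\mathbb{F}_q[\sigma] \cong \mathbb{F}_q[x]/(m_\sigma(x))$ is a field, so the minimal polynomial $m_\sigma(x)$ is irreducible; call its degree $d$. Then $\mathbb{F}_q[\sigma] \cong \mathbb{F}_{q^d}$, and $V$ becomes a nonzero vector space over $\mathbb{F}_{q^d}$, forcing $d \mid n$. Moreover $\sigma$, viewed inside $\mathbb{F}_{q^d}^\times$, has order dividing $q^d - 1$; since the order of $\sigma$ is a nontrivial power of $r$, we get $r \mid q^d - 1$, and the Zsigmondy property (no $j < n$ with $r \mid q^j - 1$) forces $d = n$. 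Thus $m_\sigma(x)$ is irreducible of degree $n$, and automatically $V$ is a one-dimensional $\mathbb{F}_{q^n}$-space on which $\sigma$ acts as a scalar of order a power of $r$.

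The main obstacle — really the only point requiring care — is the reduction to composition factors: one must justify that the order of $\sigma$ genuinely divides the order of its image in each $\GL(d_i, q)$, and handle the appearance of $q$-parts in those group orders. This is clean because $r$ is coprime to $q$ (as $r \mid q^n - 1$), so the $r$-part of $|\GL(d_i,q)|$ divides $\prod_{j=1}^{d_i}(q^j-1)$, and none of those factors is divisible by $r$. The remaining steps are then routine consequences of Schur's lemma and the structure of finite fields; Lemma~\ref{dimension_of_algebra} is not strictly needed but reconfirms $\dim \mathcal{P}(\sigma) = n$ once irreducibility is established.
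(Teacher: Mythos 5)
Your proof is correct, but it takes a somewhat different route from the paper's in both halves. For irreducibility, the paper applies the Maschke--Schur theorem to split $V$ into two proper invariant summands, notes that $\langle\sigma\rangle$ acts faithfully on one of them, and gets the contradiction from $r \mid |\GL(t,q)|$ with $t<n$; you instead pass to composition factors, show the image of $\sigma$ in each $\GL(d_i,q)$ is trivial (same Zsigmondy divisibility input), and conclude via the observation that a nontrivial $r$-element cannot be unipotent in characteristic $p\neq r$. Your version avoids invoking complete reducibility but needs the unipotency remark; both hinge on the same arithmetic fact that $r\nmid|\GL(d,q)|$ for $d<n$. For the minimal polynomial, the paper uses a cyclic-vector argument: the span of the $\sigma^i u$ is invariant, hence all of $V$, forcing $\deg s(x)\geq n$; you instead show $\mathbb{F}_q[\sigma]$ is a field $\mathbb{F}_{q^d}$ (essentially the Schur's lemma argument the paper postpones to Lemma~\ref{equality_of_algebras}), make $V$ an $\mathbb{F}_{q^d}$-space so $d\mid n$, and apply the Zsigmondy property a second time to $r\mid q^d-1$ to force $d=n$. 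A side benefit of your route is that irreducibility of the minimal polynomial is established explicitly, whereas the paper's proof of this lemma only pins down the degree and leaves irreducibility to the field structure obtained in Lemma~\ref{equality_of_algebras}; a side benefit of the paper's route is that the cyclic-vector fact is reused directly in the proof of that later lemma.
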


\begin{proof}
Let $H$ denote the cyclic subgroup of $\GL(V)$ generated by $\sigma$. Suppose that
$H$ acts reducibly on $V$. Then, since $r$ is relatively prime to $q$, the Maschke-Schur theorem, \cite{Hup1}, Satz 17.7, p.123, implies
that $V=V_1\oplus V_2$, where $V_1$ and $V_2$ are proper $H$-invariant subspaces of $V$. $H$ must act faithfully on at least one of $V_1$, $V_2$, and we can assume the notation is chosen so that
we know $H$ acts faithfully on $V_1$. 

Let $t=\dim V_1$. Then $H$ may be considered to be a subgroup of $\GL(V_1)$
 and hence $|H|$ divides $|\GL(V_1)|$. Since we have
\[
|\GL(V_1)|=q^{t(t-1)/2}\prod _{i=1}^t (q^i-1),
\]
it follows that $r$ divides $q^i-1$, where $i\leq t<n$. This contradicts the assumption that
$r$ is a Zsigmondy prime divisor of $q^{n}-1$, and it follows that $H$ acts irreducibly.

Let $s(x)$ be the minimal polynomial of $\sigma$. For any non-zero vector $u$ in $V$, the subspace $U$ of $V$ spanned by the elements $\sigma^i u$ is invariant under $H$ and hence equals $V$, by irreducibility.
But $U$ has dimension at most $\deg s(x)$, and thus $n\leq \deg s(x)$. Since $\deg s(x)$ is certainly at most $n$, the equality follows.
\end{proof}

\begin{lemma} \label{equality_of_algebras}
Let $r$ be a Zsigmondy prime divisor of $q^{n}-1$ and let $\sigma$ be a non-identity
element of $\GL(V)$ of order a power of $r$. Then the algebra $\mathcal{C}(\sigma)$ is isomorphic to the finite
field $\mathbb{F}_{q^n}$. Furthermore, we have $\mathcal{C}(\sigma)=\mathcal{P}(\sigma)$. 
\end{lemma}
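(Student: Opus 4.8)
The plan is to extract everything from Lemma~\ref{irreducible_action}: for such a $\sigma$, the minimal polynomial $s(x)$ is irreducible of degree $n$, and $H=\langle\sigma\rangle$ acts irreducibly on $V$. The first step is to see that $\mathcal{P}(\sigma)$ is already a field. The evaluation homomorphism $\mathbb{F}_q[x]\to\mathcal{P}(\sigma)$ sending $x$ to $\sigma$ is surjective with kernel the principal ideal generated by $s(x)$; since $s(x)$ is irreducible, this kernel is maximal, so $\mathcal{P}(\sigma)\cong\mathbb{F}_q[x]/(s(x))$ is a field. By Lemma~\ref{dimension_of_algebra} (equivalently, because $\deg s(x)=n$) it has dimension $n$ over $\mathbb{F}_q$, and a field extension of $\mathbb{F}_q$ of degree $n$ is isomorphic to $\mathbb{F}_{q^n}$.

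Next I would use this field structure to compute $\mathcal{C}(\sigma)$. Write $K=\mathcal{P}(\sigma)$. Restricting the action of $\End(V)$ makes $V$ into a $K$-module, hence a $K$-vector space, and comparing $\mathbb{F}_q$-dimensions gives $(\dim_K V)\cdot n=\dim_{\mathbb{F}_q}V=n$, so $\dim_K V=1$. Now if $\tau\in\mathcal{C}(\sigma)$, then $\tau$ commutes with $\sigma$, hence with every polynomial in $\sigma$, hence with every element of $K=\mathcal{P}(\sigma)$; that is, $\tau$ is a $K$-linear endomorphism of $V$. But the $K$-linear endomorphisms of a one-dimensional $K$-vector space are exactly the scalar multiplications by elements of $K$, so $\mathcal{C}(\sigma)\subseteq K$. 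Since $\mathcal{P}(\sigma)\subseteq\mathcal{C}(\sigma)$ trivially, we get $\mathcal{C}(\sigma)=\mathcal{P}(\sigma)\cong\mathbb{F}_{q^n}$, as required.

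I do not expect a genuine obstacle here; the only points demanding care are the direction of the dimension count (which forces $\dim_K V=1$ rather than merely bounding it) and the upgrade from ``commutes with $\sigma$'' to ``$K$-linear'', which is what makes the one-dimensionality bite. An alternative route would invoke Schur's lemma (irreducibility of $H$ makes $\mathcal{C}(\sigma)$ a division algebra over $\mathbb{F}_q$) together with Wedderburn's little theorem to conclude $\mathcal{C}(\sigma)$ is a field; but that argument still needs the dimension count above to identify it with $\mathbb{F}_{q^n}$ and to force equality with $\mathcal{P}(\sigma)$, so the module-theoretic argument seems the most economical.
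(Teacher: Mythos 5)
Your proof is correct. It differs from the paper's in a mild but genuine way, so a comparison is worth recording. The paper proceeds in the opposite order: it first proves $\mathcal{C}(\sigma)=\mathcal{P}(\sigma)$ by an explicit cyclic-vector computation (every $v\in V$ is $b(\sigma)u$ for a fixed nonzero $u$, so a $\tau$ commuting with $\sigma$ is forced to equal the polynomial $a(\sigma)$ with $\tau u=a(\sigma)u$), and only then identifies the field structure, by invoking Schur's Lemma to see that every nonzero element of $\mathcal{P}(\sigma)$ is invertible, so that $\mathcal{P}(\sigma)$ is a finite integral domain and hence a field of dimension $n$. You instead get the field structure up front, from the fact that $\mathcal{P}(\sigma)\cong\mathbb{F}_q[x]/(s(x))$ with $s(x)$ irreducible of degree $n$ (which is part of the statement of Lemma~\ref{irreducible_action}), and then obtain $\mathcal{C}(\sigma)=\mathcal{P}(\sigma)$ from the observation that $V$ is one-dimensional over $K=\mathcal{P}(\sigma)$ and that elements of $\mathcal{C}(\sigma)$ are $K$-linear. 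These are close kin: the paper's ``every vector is a polynomial in $\sigma$ applied to $u$'' is exactly your statement $\dim_K V=1$, phrased without module language. What your route buys is the elimination of Schur's Lemma and of the ``finite integral domain is a field'' step, making the argument a little more self-contained and making the identification with $\mathbb{F}_{q^n}$ immediate; what the paper's route buys is that it never needs to regard $V$ as a $K$-vector space, keeping everything at the level of explicit matrix/endomorphism computations. Both correctly rest on Lemma~\ref{irreducible_action}, and your dimension count and the upgrade from ``commutes with $\sigma$'' to ``$K$-linear'' are exactly the points that need care and are handled properly.
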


\begin{proof}
Let $u$ be an arbitrary non-zero element of $V$. We have shown in the proof of Lemma \ref{irreducible_action} that any element in $V$ may be expressed as $a(\sigma)u$
for some polynomial $a(x)$ in $\mathbb{F}_q[x]$. 
Suppose now that $\tau\in \mathcal{C}(\sigma)$. Then $\tau u=
a(\sigma) u$ by our observation above, for some polynomial $a(x)$. We claim that $\tau=a(\sigma)$. 

For let $v$ be any vector in $V$. We may write $v=b(\sigma)u$, for some polynomial $b(x)$. We obtain
\[
\tau v=\tau b(\sigma)u=b(\sigma) \tau u=b(\sigma)a(\sigma) u=a(\sigma)b(\sigma) u=a(\sigma) v.
\]
Thus $\tau$ and $a(\sigma)$ agree on all elements of $V$ and are thus equal. This shows that
$\mathcal{C}(\sigma)=\mathcal{P}(\sigma)$.

Since $\sigma$ acts irreducibly on $V$, Schur's Lemma, \cite{Hup1}, Lemma 10.5, p.56, implies that each non-zero element of $\mathcal{P}(\sigma)$ is invertible.
Thus $\mathcal{P}(\sigma)$ is a finite commutative integral domain, and hence a finite field. Moreover, its dimension is the degree of the minimal polynomial of $\sigma$, which we have shown in Lemma \ref{irreducible_action}
is $n$. Thus, $\mathcal{C}(\sigma)$ is isomorphic to $\mathbb{F}_{q^n}$, since there
is up to isomorphism a unique finite field of dimension $n$ over $\mathbb{F}_{q}$.
\end{proof}

\begin{lemma} \label{cyclic}
Let $r$ be a Zsigmondy prime divisor of $q^{n}-1$ and let $R$ be a non-trivial $r$-subgroup of $\GL(V)$. Then $R$ is cyclic. Thus, a Sylow $r$-subgroup of $\GL(V)$ is cyclic.
\end{lemma}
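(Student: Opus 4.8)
The plan is to exploit the fact that a non-trivial $r$-group has non-trivial centre, and then feed a central element into Lemma~\ref{equality_of_algebras}. First I would invoke the standard fact that the centre $Z(R)$ of the $r$-group $R$ is non-trivial, and then use Cauchy's theorem to choose an element $\sigma\in Z(R)$ of order exactly $r$. In particular $\sigma$ is a non-identity element of $\GL(V)$ whose order is a power of $r$, so Lemma~\ref{equality_of_algebras} applies and tells us that $\mathcal{C}(\sigma)$ is isomorphic to the field $\mathbb{F}_{q^n}$.

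Next I would observe that, because $\sigma$ lies in the centre of $R$, every element $\tau\in R$ commutes with $\sigma$, i.e. $\tau\in\mathcal{C}(\sigma)$. Each such $\tau$ is invertible, so $\tau$ belongs to the group of units of the algebra $\mathcal{C}(\sigma)$. Since $\mathcal{C}(\sigma)\cong\mathbb{F}_{q^n}$, its unit group is cyclic of order $q^n-1$. Thus $R$ is a finite subgroup of a cyclic group, and therefore cyclic. This proves the first assertion.

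For the final sentence, it remains to note that a Sylow $r$-subgroup of $\GL(V)$ is a non-trivial $r$-subgroup, to which the first part applies. The only thing to check is that $r$ actually divides $|\GL(V)|=q^{n(n-1)/2}\prod_{i=1}^{n}(q^i-1)$; this is clear because $r$ divides $q^n-1$, which is one of the factors in the product. Hence a Sylow $r$-subgroup is cyclic.

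I do not expect a genuine obstacle here: the one point requiring care is the realization that one should not try to argue directly with $R$ (whose elements need not commute with one another) but instead pass to a central element of order $r$, since that is exactly what makes the whole of $R$ land inside the commutative field $\mathcal{C}(\sigma)$.
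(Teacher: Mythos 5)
Your proposal is correct and follows the paper's own argument essentially verbatim: both take a central element $\sigma$ of order $r$ in $R$, apply Lemma~\ref{equality_of_algebras} to see that $\mathcal{C}(\sigma)$ is a field, and conclude that $R$, being contained in its multiplicative group, is cyclic. The extra remarks (Cauchy's theorem, non-triviality of a Sylow $r$-subgroup via $r\mid q^n-1$) are fine and only make explicit what the paper leaves implicit.
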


\begin{proof}
Let $\sigma$ be an element of order $r$ in the centre of $R$. Then $\sigma$ acts irreducibly on $V$ by Lemma
\ref{irreducible_action} and $\mathcal{C}(\sigma)=\mathcal{P}(\sigma)$ is a finite field
by Lemma \ref{equality_of_algebras}. Since $R$ commutes with $\sigma$, $R$ is contained in $\mathcal{C}(\sigma)$. Therefore, $R$ is a subgroup of the multiplicative group of a finite field and hence cyclic.
\end{proof}

Suppose now that $\dim V=2m$ is even and that $r$ is a Zsigmondy prime divisor of $q^{2m}-1$.
Then $r$ divides $|\Sp(2m,q)|$ and it follows from Lemma \ref{cyclic} that any non-identity $r$-subgroup
of $\Sp(2m,q)$ is cyclic and acts irreducibly on $V$. We need to know the structure of the normalizer 
in $\Sp(2m,q)$ of any such subgroup. We begin by finding the structure of its centralizer.

\begin{lemma} \label{symplectic_centralizer}
Let $r$ be a Zsigmondy prime divisor of $q^{2m}-1$ and let $R$ be a non-identity $r$-subgroup
of $\Sp(2m,q)$. Let $C_S(R)$ denote the centralizer of $R$ in $\Sp(2m,q)$. Then
$C_S(R)$ is cyclic of order $q^m+1$. 
\end{lemma}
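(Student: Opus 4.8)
The plan is to realise $V$ as a one-dimensional vector space over the field $E:=\mathcal{C}(\sigma)$, for a suitable $\sigma\in R$, and then to compute $C_S(R)=\Sp(2m,q)\cap C_{\GL(V)}(R)$ by means of the adjoint involution attached to $f$.

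First I would fix a non-identity element $\sigma$ of $R$; since $R$ is an $r$-group, $\sigma$ has order a power of $r$. By Lemma \ref{irreducible_action} the group $\langle\sigma\rangle$ acts irreducibly on $V$, and by Lemma \ref{equality_of_algebras} the algebra $E:=\mathcal{C}(\sigma)=\mathcal{P}(\sigma)$ is a field isomorphic to $\mathbb{F}_{q^{2m}}$. Since $R$ is abelian (being cyclic, by Lemma \ref{cyclic}) it centralizes $\sigma$, so $R\subseteq \mathcal{P}(\sigma)=E$; as $E$ is commutative, every element of $E$ then centralizes $R$, whence $\mathcal{C}(R)=\mathcal{C}(\sigma)=E$ and $C_{\GL(V)}(R)=E^{\times}$, acting on $V$ through the field structure. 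This group is cyclic of order $q^{2m}-1$, so $C_S(R)=\Sp(2m,q)\cap E^{\times}$ is automatically cyclic, and only its order remains to be found.

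Next I would bring in the adjoint. Because $f$ is non-degenerate there is an $\mathbb{F}_q$-algebra anti-automorphism $\tau\mapsto\tau^{*}$ of $\End(V)$ determined by $f(\tau u,v)=f(u,\tau^{*}v)$ for all $u,v$, and $\tau\in\GL(V)$ lies in $\Sp(2m,q)$ precisely when $\tau^{*}=\tau^{-1}$. Since $R\subseteq\Sp(2m,q)$ we have $\sigma^{*}=\sigma^{-1}$, hence $(a(\sigma))^{*}=a(\sigma^{-1})$ for every polynomial $a$; therefore $*$ restricts to a map $E\to E$ which is $\mathbb{F}_q$-linear, multiplicative (as $E$ is commutative), fixes $\mathbb{F}_q$ pointwise, and sends $\sigma$ to $\sigma^{-1}$. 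Thus $*|_E$ is an element of $\mathrm{Gal}(E/\mathbb{F}_q)$, say $e\mapsto e^{q^{j}}$ with $0\le j<2m$, and $\sigma^{q^{j}}=\sigma^{-1}$ forces $\sigma^{q^{j}+1}=1$, so $r\mid q^{j}+1$. Since $r$ is a Zsigmondy prime divisor of $q^{2m}-1$, the multiplicative order of $q$ modulo $r$ is exactly $2m$; then $q^{j}\equiv -1\bmod r$ forces $2m\mid 2j$ but $2m\nmid j$, and because $0\le j<2m$ and $r$ is odd this yields $j=m$. Hence $e^{*}=e^{q^{m}}$ for every $e\in E$.

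Finally I would assemble the conclusion: an element $e\in E^{\times}$ belongs to $\Sp(2m,q)$ if and only if $e^{q^{m}}=e^{-1}$, that is, if and only if $e^{q^{m}+1}=1$. Since $q^{m}+1$ divides $q^{2m}-1=|E^{\times}|$, these $e$ form the unique subgroup of $E^{\times}$ of order $q^{m}+1$; therefore $C_S(R)$ is cyclic of order $q^{m}+1$, as claimed. The one genuinely delicate point is the identification $*|_E\colon e\mapsto e^{q^{m}}$ — singling out this Frobenius power among the others — and that is exactly where the Zsigmondy hypothesis is used, via the fact that the order of $q$ modulo $r$ equals $2m$. (An alternative to the adjoint computation is to write $f(u,v)=\Tr_{E/\mathbb{F}_q}(c\,u\,v^{q^{m}})$ for a suitable $c\in E^{\times}$, by expressing an arbitrary $\mathbb{F}_q$-bilinear form on $E$ through a $q$-linearized polynomial and using $f(\sigma u,\sigma v)=f(u,v)$ to kill all but one coefficient; this leads to the same answer.)
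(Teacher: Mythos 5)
Your proof is correct and follows essentially the same route as the paper: identify $\mathcal{C}(\sigma)$ with $\mathbb{F}_{q^{2m}}$, observe that the adjoint with respect to $f$ restricts to a Galois automorphism of this field sending $\sigma$ to $\sigma^{-1}$, and conclude that $C_S(R)$ is the kernel of the norm-type map $e\mapsto e^{q^m+1}$. The only (harmless) variation is how you pin down the automorphism as $e\mapsto e^{q^m}$: the paper notes that the adjoint is an involution and so restricts to the unique order-two automorphism, whereas you deduce $j=m$ from the Zsigmondy condition that $q$ has order $2m$ modulo $r$; both are valid.
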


\begin{proof}
We know from Lemma \ref{cyclic} that $R$ is cyclic and we
let $\sigma$ be a generator of $R$. Lemma \ref{equality_of_algebras} shows that
$\mathcal{P}(\sigma)=\mathcal{C}(\sigma)$ is isomorphic to $\mathbb{F}_{q^{2m}}$. 

Let $f$ be a non-degenerate $R$-invariant alternating bilinear form defined on $V\times V$.
Given $\tau$ in $\mathcal{C}(\sigma)$, we define its adjoint $\tau*$ by the formula
\[
f(\tau^* u,v)=f(u,\tau v)
\]
for all $u$ and $v$ in $V$. It is clear that $\tau^*\in \mathcal{C}(\sigma)$ and the mapping $\tau\to \tau^*$ is an $\mathbb{F}_q$-automorphism of $\mathcal{C}(\sigma)$ whose order is one or two. Since $\sigma\in \mathcal{C}(\sigma)$ and $\sigma^*=\sigma^{-1}\neq \sigma$, the automorphism of $\mathcal{C}(\sigma)$ is not the identity and hence has order two. 

Now the only
$\mathbb{F}_q$-automorphism of $\mathbb{F}_{q^{2m}}$ of order two is given by $\alpha\to \alpha^{q^m}$.
Furthermore, the elements $\tau$ in $C_S(R)$ are characterized as those that satisfy $\tau^*=\tau^{-1}$.
Thus, $C_S(R)$ consists of those elements $\tau$ that satisfy $\tau^{q^m+1}=1$ and  is therefore a cyclic group of order $q^m+1$, since the invertible elements of $\mathcal{C}(\sigma)$ form a cyclic group of order
$q^{2m}-1$. 
\end{proof}

Next, we try to determine the structure of the normalizer of $R$ in $\Sp(2m,q)$. The initial information
we obtain is somewhat imprecise and we will have to improve it as we proceed with our investigations.

\begin{lemma} \label{symplectic_normalizer}
Let $r$ be a Zsigmondy prime divisor of $q^{2m}-1$ and let $R$ be a non-identity $r$-subgroup
of $\Sp(2m,q)$. Let $N_S(R)$ denote the normalizer of $R$ in $\Sp(2m,q)$. Then
$N_S(R)/C_S(R)$ is cyclic of order dividing $2m$. 
\end{lemma}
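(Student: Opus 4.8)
The plan is to realize $N_S(R)/C_S(R)$ as a subgroup of the Galois group of $\mathbb{F}_{q^{2m}}$ over $\mathbb{F}_q$, via the natural conjugation action of $N_S(R)$ on the field $\mathcal{C}(R)$. By Lemma~\ref{cyclic}, $R$ is cyclic; fix a generator $\sigma$. Since $R=\langle\sigma\rangle$ we have $\mathcal{C}(R)=\mathcal{C}(\sigma)$, and by Lemma~\ref{equality_of_algebras} this algebra, which I will call $K$, equals $\mathcal{P}(\sigma)$ and is a field isomorphic to $\mathbb{F}_{q^{2m}}$. In particular $\mathrm{Aut}_{\mathbb{F}_q}(K)$ is cyclic of order $2m$, generated by the Frobenius map $x\mapsto x^q$.

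First I would check that conjugation preserves $K$. If $g\in N_S(R)$ and $\tau\in K$, then $\tau$ commutes with every element of $R$, so $g\tau g^{-1}$ commutes with every element of $gRg^{-1}=R$; hence $g\tau g^{-1}\in K$. Thus $\tau\mapsto g\tau g^{-1}$ maps $K$ to $K$, is clearly $\mathbb{F}_q$-linear and multiplicative, and has inverse $\tau\mapsto g^{-1}\tau g$, so it is an $\mathbb{F}_q$-algebra automorphism of $K$. This defines a group homomorphism $\phi\colon N_S(R)\to \mathrm{Aut}_{\mathbb{F}_q}(K)$.

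Next I would compute $\ker\phi$. If $g\in\ker\phi$ then in particular $g\sigma g^{-1}=\sigma$, so $g\in C_S(R)$. Conversely, if $g\in C_S(R)$ then $g$ commutes with $\sigma$, so $g\in\mathcal{C}(\sigma)=K$; as $K$ is commutative, conjugation by $g$ fixes $K$ pointwise, so $g\in\ker\phi$. Hence $\ker\phi=C_S(R)$, and therefore $N_S(R)/C_S(R)\cong\mathrm{im}(\phi)$ is a subgroup of a cyclic group of order $2m$, hence cyclic of order dividing $2m$, as claimed.

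There is no genuinely hard step here: the result is a routine consequence of the structure theory established in Lemmas~\ref{irreducible_action}, \ref{equality_of_algebras} and \ref{cyclic}. The only points requiring a moment's care are the verification that $\mathcal{C}(R)$ is stable under conjugation by elements of $N_S(R)$ (which rests on $gRg^{-1}=R$) and the identification of $\ker\phi$ with $C_S(R)$ rather than something larger (which rests on the commutativity of the field $K$); both are immediate once stated.
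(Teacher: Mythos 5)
Your proof is correct and follows essentially the same route as the paper: both realize $N_S(R)/C_S(R)$ inside $\mathrm{Aut}_{\mathbb{F}_q}(\mathcal{C}(\sigma))\cong\mathrm{Gal}(\mathbb{F}_{q^{2m}}/\mathbb{F}_q)$ via the conjugation action and identify the kernel with $C_S(R)$ using the commutativity of the field $\mathcal{C}(\sigma)=\mathcal{P}(\sigma)$. Your write-up merely makes explicit two small verifications (stability of $\mathcal{C}(\sigma)$ under conjugation by $N_S(R)$, and the exact computation of the kernel) that the paper states more briefly.
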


\begin{proof}
Let $\tau$ be an element of $N_S(R)$. Then it is clear that for each element $a(\sigma)$ of $\mathcal{C}(\sigma)$, 
\[
\tau a(\sigma)\tau^{-1} =a(\tau\sigma\tau^{-1})
\]
is also an element of  $\mathcal{C}(\sigma)$. Thus if we define $\alpha_\tau:  \mathcal{C}(\sigma)\to  \mathcal{C}(\sigma)$ by
\[
\alpha_\tau (a(\sigma))=\tau a(\sigma)\tau^{-1},
\]
 we see that $\alpha_\tau$ is an automorphism
of $\mathcal{C}(\sigma)$. Moreover, $\tau\to \alpha_\tau$ is a homomorphism from $N_S(R)$ into
the automorphism group of $\mathcal{C}(\sigma)$ whose kernel is $C_S(R)$. Since by Lemma \ref{equality_of_algebras},
$\mathcal{C}(\sigma)$ is isomorphic to the finite field $\mathbb{F}_{q^{2m}}$, whose automorphism group
over $\mathbb{F}_{q}$ is cyclic of order $2m$, we deduce that $N_S(R)/C_S(R)$ is isomorphic to a subgroup
of a cyclic group of order $2m$.
\end{proof}

To be more precise about the structure of $N_S(R)$, we will use some properties of Galois fields to construct an explicit model of the subgroup.

Let $V'$ denote $\mathbb{F}_{q^{2m}}$ considered as a $2m$-dimensional vector space over $\mathbb{F}_q$. Let $\omega$ be a generator of the multiplicative group of $\mathbb{F}_{q^{2m}}$ and let $\epsilon=\omega^{(q^m+1)/2}$. Then we have 
$\epsilon^{q^m}=-\epsilon$, as we may verify. Let $\Tr$ denote the trace form from
$\mathbb{F}_{q^{2m}}$ to $\mathbb{F}_{q}$.

We define a bilinear form $f$ on $V'\times V'$ by setting 
\[
f(x,y)=\Tr(\epsilon xy^{q^m})
\]
for all $x$ and $y$ in $\mathbb{F}_{q^{2m}}$. 

\begin{lemma} \label{nondegeneracy}
The bilinear form $f$ defined above is alternating and non-degenerate.
\end{lemma}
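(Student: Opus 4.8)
The plan is to verify, in turn, that $f$ is $\mathbb{F}_q$-bilinear, that it is alternating, and that it is non-degenerate. Bilinearity is immediate: the Frobenius map $y\mapsto y^{q^m}$ is $\mathbb{F}_q$-linear on $V'$ because $c^{q^m}=c$ for every $c\in\mathbb{F}_q$, and $\Tr$ is $\mathbb{F}_q$-linear, so in each variable $f$ is a composite of $\mathbb{F}_q$-linear maps (the fixed scalar $\epsilon$ being absorbed into the product before the trace is applied).

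For the alternating property I would compute $f(x,x)=\Tr(\epsilon x^{q^m+1})$ and observe that $x^{q^m+1}=x\cdot x^{q^m}$ is the norm of $x$ from $\mathbb{F}_{q^{2m}}$ down to $\mathbb{F}_{q^m}$; equivalently, $(x^{q^m+1})^{q^m}=x^{q^{2m}+q^m}=x^{q^m+1}$ (using $x^{q^{2m}}=x$), so $x^{q^m+1}\in\mathbb{F}_{q^m}$. Writing the trace as the composite $\Tr=\Tr_{\mathbb{F}_{q^m}/\mathbb{F}_q}\circ\Tr_{\mathbb{F}_{q^{2m}}/\mathbb{F}_{q^m}}$ and pulling the $\mathbb{F}_{q^m}$-scalar $x^{q^m+1}$ out of the relative trace leaves
\[
\Tr_{\mathbb{F}_{q^{2m}}/\mathbb{F}_{q^m}}(\epsilon)=\epsilon+\epsilon^{q^m}=\epsilon-\epsilon=0,
\]
using $\epsilon^{q^m}=-\epsilon$; hence $f(x,x)=0$ for all $x\in V'$. (Since $p$ is odd, one may instead derive skew-symmetry first: applying the Galois-invariance $\Tr(z)=\Tr(z^{q^m})$ to $z=\epsilon xy^{q^m}$ and using $\epsilon^{q^m}=-\epsilon$ together with $y^{q^{2m}}=y$ gives $f(x,y)=-f(y,x)$, and then $f(x,x)=0$.)

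For non-degeneracy, suppose $x\neq 0$ lies in the left radical of $f$, so $f(x,y)=0$ for all $y\in V'$. Since $\epsilon x\neq 0$ and $y\mapsto y^{q^m}$ is a bijection of $\mathbb{F}_{q^{2m}}$, the map $y\mapsto \epsilon xy^{q^m}$ is a bijection of $\mathbb{F}_{q^{2m}}$; thus $f(x,\cdot)\equiv 0$ would force $\Tr$ to vanish identically on $\mathbb{F}_{q^{2m}}$. But $\Tr$ is a non-zero $\mathbb{F}_q$-linear functional, since $z\mapsto z+z^{q}+\cdots+z^{q^{2m-1}}$ is given by a polynomial of degree $q^{2m-1}<q^{2m}=|\mathbb{F}_{q^{2m}}|$ and so cannot vanish on all of $\mathbb{F}_{q^{2m}}$. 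This contradiction shows the left radical is trivial, and since $V'$ is finite-dimensional, $f$ is non-degenerate. All the computations here are routine; the only step that needs a little care is the alternating one, where it is cleanest to use the trace tower together with the fact that $x^{q^m+1}\in\mathbb{F}_{q^m}$ rather than to attempt a bare symmetry argument — keeping that field-tower bookkeeping straight is the main (mild) obstacle, after which non-degeneracy follows in one line from the surjectivity of the trace.
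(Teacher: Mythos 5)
Your proof is correct, and the non-degeneracy argument is essentially the paper's: a nonzero element of the radical would force the trace to vanish identically on $\mathbb{F}_{q^{2m}}$, which it does not (the paper simply invokes that the trace form is not identically zero, while you justify it by the degree bound on $z+z^q+\cdots+z^{q^{2m-1}}$ — a harmless elaboration). Where you differ is the alternating step. The paper proves skew-symmetry directly, $f(x,y)=\Tr\bigl((\epsilon xy^{q^m})^{q^m}\bigr)=-\Tr(\epsilon x^{q^m}y)=-f(y,x)$, and then concludes "alternating" using (implicitly) that $q$ is odd; you instead compute $f(x,x)=\Tr(\epsilon x^{q^m+1})$ directly, observe $x^{q^m+1}\in\mathbb{F}_{q^m}$, and kill it through the trace tower via $\Tr_{\mathbb{F}_{q^{2m}}/\mathbb{F}_{q^m}}(\epsilon)=\epsilon+\epsilon^{q^m}=0$. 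Your route has the small advantage of being characteristic-free (it proves $f(x,x)=0$ with no appeal to $2$ being invertible), whereas the paper's is a one-line Galois-invariance computation that suffices here because $q$ is odd throughout the paper; you record that alternative yourself in your parenthetical, so the two arguments are in substance interchangeable for the present purpose.
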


\begin{proof}
We first show that $f$ is alternating. Now the trace form is invariant under Galois automorphisms and thus
\[
f(x,y)=\Tr(\epsilon xy^{q^m})=\Tr((\epsilon xy^{q^m})^{q^m})=-\Tr(\epsilon x^{q^m}y)=-f(y,x)
\]
for all $x$ and $y$ in $\mathbb{F}_{q^{2m}}$. This implies that $f$ is alternating.

Suppose now that $y$ is in the radical of $f$. Then we have 
\[
\Tr(\epsilon x y^{q^m})=0
\]
for all $x$. Since the trace form is not identically zero, this implies that $y=0$, and thus $f$ is indeed non-degenerate.
\end{proof}

We now define two further elements $\lambda$ and $\mu$ of $\mathbb{F}_{q^{2m}}$, as follows.
We set
\[
\lambda=\omega^{(q-1)/2},   \quad \mu=\omega^{(q^m-1)}
\]
It is then elementary to verify that $\mu$ has multiplicative order $q^m+1$.

Next, we define elements $\pi$ and $\rho$ of $\GL(V')$, as follows. We set
\[
\pi x=\lambda x^q, \quad \rho x=\mu x
\]
for all $x$ in $V'=\mathbb{F}_{q^{2m}}$.

\begin{lemma} \label{two_isometries}
The elements $\pi$ and $\rho$ defined above are isometries of $f$.
\end{lemma}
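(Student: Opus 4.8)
The plan is to check the defining identity $f(\sigma x,\sigma y)=f(x,y)$ directly for $\sigma=\pi$ and for $\sigma=\rho$, using only that $\Tr$ is fixed by the Galois group of $\mathbb{F}_{q^{2m}}$ over $\mathbb{F}_q$, that $\mu$ has multiplicative order $q^m+1$, and the explicit expressions of $\lambda$ and $\epsilon$ as powers of $\omega$. Invertibility of $\pi$ and $\rho$ is immediate since $\lambda,\mu\neq 0$ and the Frobenius $x\mapsto x^q$ is bijective and $\mathbb{F}_q$-linear, so it suffices to treat the bilinear form.

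For $\rho$, substituting $\rho x=\mu x$ and $\rho y=\mu y$ and pulling the constant out of the Frobenius gives
\[
f(\rho x,\rho y)=\Tr\bigl(\epsilon\,(\mu x)(\mu y)^{q^m}\bigr)=\Tr\bigl(\epsilon\,\mu^{1+q^m}\,x\,y^{q^m}\bigr),
\]
and since $\mu=\omega^{q^m-1}$ has order $q^m+1$ we have $\mu^{1+q^m}=1$, so the right-hand side equals $f(x,y)$.

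For $\pi$, the same substitution yields
\[
f(\pi x,\pi y)=\Tr\bigl(\epsilon\,(\lambda x^q)(\lambda y^q)^{q^m}\bigr)=\Tr\bigl(\epsilon\,\lambda^{1+q^m}\,x^{q}\,y^{q^{m+1}}\bigr).
\]
Applying the automorphism $z\mapsto z^q$ inside the trace defining $f(x,y)$ itself (which changes nothing, as $\Tr(z)=\Tr(z^q)$) gives $f(x,y)=\Tr\bigl(\epsilon^{q}\,x^{q}\,y^{q^{m+1}}\bigr)$. Comparing the two expressions, it is enough to show $\epsilon\lambda^{1+q^m}=\epsilon^{q}$, i.e. $\lambda^{1+q^m}=\epsilon^{q-1}$; and this is a direct comparison of exponents of $\omega$, since $\lambda^{1+q^m}=\omega^{(q-1)(q^m+1)/2}$ and $\epsilon^{q-1}=\omega^{(q^m+1)(q-1)/2}$.

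I do not expect a genuine obstacle here: the whole argument is a short manipulation of trace identities, and in particular it needs no hypothesis on $q$ beyond oddness. The only point that calls for a word of justification is the passage from ``$\Tr\bigl(\epsilon\lambda^{1+q^m}x^{q}y^{q^{m+1}}\bigr)=\Tr\bigl(\epsilon^{q}x^{q}y^{q^{m+1}}\bigr)$ for all $x,y$'' to the scalar identity $\epsilon\lambda^{1+q^m}=\epsilon^{q}$: since $x\mapsto x^q$ and $y\mapsto y^{q^{m+1}}$ are bijections of $\mathbb{F}_{q^{2m}}$, this follows from non-degeneracy of the pairing $(u,v)\mapsto\Tr(uv)$; alternatively one may bypass it entirely by simply comparing exponents of $\omega$ as above.
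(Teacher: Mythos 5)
Your proof is correct and is essentially the same as the paper's: both verify $\rho$ via $\mu^{q^m+1}=1$ and verify $\pi$ by rewriting $f(x,y)=\Tr(\epsilon^q x^q y^{q^{m+1}})$ using Galois invariance of the trace and then checking $\epsilon\lambda^{q^m+1}=\epsilon^q$ by comparing exponents of $\omega$. The extra remark about non-degeneracy of the trace pairing is unnecessary, since the direct comparison of exponents (which the paper also uses) already settles the scalar identity.
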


\begin{proof}
For any $x$ and $y$ in $\mathbb{F}_{q^{2m}}$, we have 
\[
f(\pi x,\pi y)=\Tr(\epsilon \lambda x^q(\lambda y^q)^{q^m})=\Tr(\epsilon \lambda^{q^m+1}x^qy^{q^{m+1}}).
\]
When we recall that $\Tr$ is Galois invariant, we see that 
\[
f(x,y)=\Tr(\epsilon xy^{q^m})=\Tr(\epsilon^q x^q y^{q^{m+1}}).
\]
Thus, to prove that $\pi$ is an isometry, we must show that
\[
\epsilon^q=\epsilon \lambda^{q^m+1}.
\]
Now, given the definitions of $\epsilon$ and $\lambda$, we have
\[
\epsilon^q=\omega^{q(q^m+1)/2}, \quad \epsilon\lambda^{q^m+1}=\omega^{(q^m+1)/2}\omega^{(q^m+1)(q-1)/2}
\]
and these equations imply the desired equality. Hence, $\pi$ is  an isometry.

As far as $\rho$ is concerned, we have
\[
f(\rho x,\rho y)=\Tr(\epsilon \mu x \mu^{q^m}y^{q^m})=f(x,y),
\]
since $\mu^{q^m+1}=1$, and this shows that $\rho$ is also an isometry.
\end{proof}

It is straightforward to prove that $\pi$ has order $4m$ and $\pi^{2m}=-I$. Similarly, $\rho$ has order $q^m+1$, with $\rho^{(q^m+1)/2}=-I$.

\begin{lemma} \label{relations_between_pi_and_rho}
The isometries $\pi$ and $\rho$ satisfy the relation
\[
\pi\rho=\rho^q\pi.
\]
\end{lemma}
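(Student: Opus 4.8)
The plan is to verify the identity operator-theoretically, by evaluating both sides on an arbitrary element $x$ of $V'=\mathbb{F}_{q^{2m}}$ and comparing. First I would record the elementary fact that, since $\rho$ is multiplication by the scalar $\mu$, its powers are transparent: $\rho^k x=\mu^k x$ for every integer $k$, so in particular $\rho^q x=\mu^q x$. I would also recall that $\pi$ and $\rho$ are genuine $\mathbb{F}_q$-linear maps (already needed when they were introduced), the linearity of $\pi$ resting on the additivity of the Frobenius $x\mapsto x^q$ and on $\lambda^q=\lambda$ being false but $c^q=c$ for $c\in\mathbb{F}_q$.

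Then the computation is a single line on each side. On the left, $(\pi\rho)(x)=\pi(\mu x)=\lambda(\mu x)^q=\lambda\mu^q x^q$, using that $x\mapsto x^q$ is a ring homomorphism of $\mathbb{F}_{q^{2m}}$. On the right, $(\rho^q\pi)(x)=\rho^q(\lambda x^q)=\mu^q\lambda x^q$. Since $\mathbb{F}_{q^{2m}}$ is commutative, $\lambda\mu^q x^q=\mu^q\lambda x^q$, so the two operators agree on every $x\in V'$ and hence $\pi\rho=\rho^q\pi$ in $\GL(V')$.

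There is no real obstacle here; the statement is essentially a bookkeeping identity. The only thing worth stating cleanly is the conceptual reason behind it: conjugation by $\pi$ applies the Frobenius automorphism built into $\pi$ to the scalar defining $\rho$, so $\pi\rho\pi^{-1}$ is multiplication by $\mu^q$, which is exactly $\rho^q$; rearranging gives $\pi\rho=\rho^q\pi$. I would present the direct evaluation as the proof and perhaps mention this conjugation viewpoint as a remark, since it is the form in which the relation will presumably be used when analyzing the structure of $N_S(R)$.
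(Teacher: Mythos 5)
Your proof is correct and is essentially the paper's own argument: both evaluate $\pi\rho$ and $\rho^q\pi$ on an arbitrary $x\in\mathbb{F}_{q^{2m}}$, obtaining $\lambda\mu^q x^q$ on each side. The conjugation remark is a nice gloss but adds nothing beyond the paper's one-line computation.
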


\begin{proof}
Let $x$ be an element of $\mathbb{F}_{q^{2m}}$. Then we have
\[
\pi (\rho x)=\lambda (\mu x)^q=\lambda\mu^q x^q.
\]
Similarly, 
\[
\rho^q(\pi x)=\mu^q \lambda x^q
\]
and this clearly provides the desired equality.
\end{proof}

Let $G$ be the subgroup of the isometry group of $f$ generated by $\pi$ and $\rho$, and let
$H$ be the subgroup generated by $\rho$. Lemma \ref{relations_between_pi_and_rho} implies that $\pi$ normalizes $H$, and indeed it induces an automorphism of order $2m$ of $H$, with $\pi^{2m}=-I$ centralizing $H$.

We now describe the structure of $G$ in the circumstances that are relevant to this paper.

\begin{lemma} \label{structure_of_G}
Suppose that $q\equiv 1 \bmod 4$ or that $m$ is even. Let $A$ and $B$ be the subgroups of $G$ generated by
$\rho^2$ and $\pi$, respectively. Then $A\cap B=I$ and $G$ is the semi-direct product of the normal cyclic subgroup $A$ of odd order $(q^m+1)/2$ with the cyclic subgroup $B$ of order $4m$. $G$ is thus metacyclic of order $2m(q^m+1)$. A Sylow $2$-subgroup of $G$ is cyclic. $G$ contains a unique involution,
$-I$. The normalizer of a subgroup of order $4$ in $G$ has order $4m$.
\end{lemma}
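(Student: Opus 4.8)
The plan is to realise $G$ as an internal semidirect product and read off all the assertions from that structure. Put $H=\langle\rho\rangle$ and $B=\langle\pi\rangle$. By Lemma~\ref{relations_between_pi_and_rho} we have $\pi\rho\pi^{-1}=\rho^{q}\in H$, so $\pi$ normalises $H$; hence $H\trianglelefteq G$ and $G=HB$. The first substantive step is to pin down $H\cap B$. Every element of $B$ has the form $x\mapsto c\,x^{q^{k}}$ with $c\in\mathbb{F}_{q^{2m}}^{\times}$, whereas every element of $H$ is an $\mathbb{F}_{q^{2m}}$-linear scalar map $x\mapsto\nu x$; a map $x\mapsto c\,x^{q^{k}}$ is $\mathbb{F}_{q^{2m}}$-linear only when $x\mapsto x^{q^{k}}$ is the identity of $\mathbb{F}_{q^{2m}}$, i.e.\ only when $2m\mid k$. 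As $\pi$ has order $4m$ and $\pi^{2m}=-I$, this gives $H\cap B=\{I,-I\}$, and therefore
\[
|G|=\frac{|H|\,|B|}{|H\cap B|}=\frac{(q^{m}+1)\cdot 4m}{2}=2m(q^{m}+1).
\]

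Next I would bring in the hypothesis. Whether $q\equiv1\bmod4$ or $m$ is even, one checks that $q^{m}\equiv1\bmod4$, so $q^{m}+1=2u$ with $u=(q^{m}+1)/2$ odd. Then $A=\langle\rho^{2}\rangle$ is the unique subgroup of order $u$ of the cyclic group $H$, hence is characteristic in $H$ and, since $H\trianglelefteq G$, normal in $G$. Because $|A|$ is odd, $A\cap B\le H\cap B=\{I,-I\}$ forces $A\cap B=I$; then $|AB|=u\cdot 4m=2m(q^{m}+1)=|G|$, so $G=AB$. Thus $G=A\rtimes B$ with $A$ cyclic of odd order $(q^{m}+1)/2$, normal in $G$, and $B$ cyclic of order $4m$, so $G$ is metacyclic of order $2m(q^{m}+1)$.

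The remaining three assertions come out of this decomposition. As $u$ is odd, the $2$-part of $|G|=4mu$ equals that of $|B|=4m$, so a Sylow $2$-subgroup of the cyclic group $B$ is also a Sylow $2$-subgroup of $G$; hence every Sylow $2$-subgroup of $G$ is cyclic. For the involutions, let $g\in G$ have order $2$ and write $g=ab$ with $a\in A$, $b\in B$; since $A\trianglelefteq G$ and $g^{2}=I$, the image of $b$ in $G/A\cong B$ has order dividing $2$, so $b^{2}\in A\cap B=I$ and $b\in\{I,-I\}$ ($B$ being cyclic of order $4m$ with unique involution $\pi^{2m}=-I$). If $b=I$ then $g=a$ and $a^{2}=I$, forcing $a=I$ as $|A|$ is odd, contrary to $g\neq I$; if $b=-I$, which is central in $G$, then $I=g^{2}=a^{2}$ again forces $a=I$, so $g=-I$. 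Hence $-I$ is the unique involution of $G$.

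Finally, take a subgroup $C\le G$ with $|C|=4$. Each (cyclic) Sylow $2$-subgroup of $G$ contains a unique subgroup of order $4$, the Sylow $2$-subgroups are conjugate, and $C$ lies in some Sylow $2$-subgroup, so all subgroups of order $4$ in $G$ are conjugate; it therefore suffices to treat $C=\langle\pi^{m}\rangle$, which has order $4$ since $\pi$ has order $4m$. Plainly $B\le N_{G}(C)$. Conversely let $g=ab\in N_{G}(C)$ with $a\in A$, $b\in B$. Since $B$ is abelian and $\pi^{m}\in B$, $g\pi^{m}g^{-1}=a\pi^{m}a^{-1}$; and since $\pi^{m}$ inverts $A$ — indeed $\pi^{m}\rho^{2}\pi^{-m}=\rho^{2q^{m}}=\rho^{-2}$, using $q^{m}\equiv-1\bmod(q^{m}+1)$ — we get $a\pi^{m}a^{-1}=a^{2}\pi^{m}$. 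As this element lies in $C\subseteq B$, we obtain $a^{2}=(a^{2}\pi^{m})\pi^{-m}\in A\cap B=I$, so $a=I$ and $g\in B$. Hence $N_{G}(C)=B$ has order $4m$. The step I expect to need the most care is the identification $H\cap B=\{I,-I\}$ (and with it the conjugacy of the order-$4$ subgroups used at the end): a purely numerical bound such as $|A\cap B|\mid\gcd(4m,(q^{m}+1)/2)$ is not enough — that gcd can exceed $1$, e.g.\ $\gcd(12,63)=3$ when $q=5$, $m=3$ — so one must use the explicit form of the elements of $B$ as semilinear maps rather than a numerical coincidence.
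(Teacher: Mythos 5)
Your proof is correct and follows essentially the same route as the paper: you establish $A\cap B=I$ by exploiting the explicit form of the elements of $B$ as semilinear maps versus the $\mathbb{F}_{q^{2m}}$-scalar maps in $\langle\rho\rangle$, deduce the semidirect product, cyclic Sylow $2$-subgroups and unique involution from the odd order of $A$, and compute $N_G(\langle\pi^m\rangle)=B$ via conjugacy of order-$4$ subgroups in the cyclic Sylow $2$-subgroups together with the fact that $\pi^m$ inverts $A$. The differences (your direct $g=ab$ computations for the involution and the normalizer, where the paper invokes Sylow conjugacy and an odd-order centralizing argument) are only cosmetic.
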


\begin{proof}
Let us set $\theta=\rho^2$. Suppose that $\theta^b=\pi^c$ is in $A\cap B$. Then for all $x$ in
$\mathbb{F}_{q^{2m}}$ we have
\[
\theta^b x=\mu^{2b} x=\pi^c x=\lambda^{(q^c-1)/(q-1)}x^{q^c}.
\]
We take $x=1$ and deduce that $\mu^{2b}=\lambda^{(q^c-1)/(q-1)}$. Hence $x^{q^c}=x$ for all $x$ in
$\mathbb{F}_{q^{2m}}$. This implies that $2m$ divides $c$, say $c=2md$, for some integer $d$. 

Now we have $\pi^{2m}=-I$ and thus $\pi^c=(-I)^d$. However, as $\theta$ has odd order under the given hypothesis, an equation of the form $\theta^b=(-I)^d$ implies that $d$ is even and $\theta^b=I=\pi^c$.
This proves that $A\cap B=I$.

Let $T$ be a Sylow 2-subgroup of $G$. Since $A$ has odd order, $A\cap T =I$ and it follows that $T$ is isomorphic to a subgroup of $G/A\cong B$. Thus $T$ is cyclic of order equal to the 2-part of $4m$.

Sylow's theorem implies that all involutions in $G$ are conjugate to the unique involution in $T$. Since
$G$ contains the central involution $-I$, whose only $G$-conjugate is itself, $-I$ is the unique involution in $G$. Similarly, any subgroup of order 4 in $G$ is conjugate to the subgroup $Y$, say, generated by $\pi^m$. Let $N_G(Y)$ denote the normalizer of $Y$ in $G$.
Clearly, $Y$ is centralized by $B$ and hence $B\leq N_G(Y)$.

Suppose that $N_G(Y)$ is strictly larger than $B$. Then $N_G(Y)$ must intersect
$A$ non-trivially, and since $A$ is normal in $G$, $\pi^m$  centralizes some non-identity element of
$A$. We have seen that $\pi$ satisfies $\pi \rho=\rho^q \pi$ and thus similarly $\pi \theta =\theta^q \pi$. Consequently, 
\[
\pi^m \theta =\theta^{q^m} \pi^m.
\]
But $\theta^{q^m}=\theta^{-1}$ and it follows that $\pi^m$ inverts all elements of $A$ when acting by conjugation. Thus, as $A$ has odd order, the only element of $A$ centralized by $\pi^m$ is the identity.
This proves that $N_G(Y)=B$, as required
\end{proof}

We may clearly consider the group $G$ described above to be a subgroup of $\Sp(2m,q)$. It contains a normal cyclic subgroup $A$ of odd order $(q^m+1)/2$ when $q\equiv 1 \bmod 4$ or when $m$ is even. Let $r$ be a Zsigmondy prime divisor of $q^{2m}-1$ and let $C$ be a subgroup of $G$ of order $r^t$, where $t\geq 1$. $C$ is a characteristic subgroup of $A$ and is hence normal in $G$. It follows that $G\leq N_S(C)$, where $S$ denotes $\Sp(2m,q)$. 
Now Lemma \ref{symplectic_normalizer} implies that $N_S(C)$ has order dividing $2m(q^m+1)$.
Since $|G|=2m(q^m+1)$, we deduce that $G=N_S(C)$. Finally, Sylow's theorem implies that any subgroup of $\Sp(2m,q)$ of order $r^t$ is conjugate to $C$, and hence has normalizer isomorphic to $G$. We summarize this argument below.

\begin{corollary} \label{normalizer_of_r_subgroup}
Let $r$ be a Zsigmondy prime divisor of $q^{2m}-1$ and let $C$ be a non-identity $r$-subgroup of $\Sp(2m,q)$. Then if $q\equiv 1\bmod 4$ or if $m$ is even, the normalizer of $C$ in $\Sp(2m,q)$ has order $2m(q^m+1)$ and is isomorphic to the group $G$ described in Lemma \ref{structure_of_G}.
\end{corollary}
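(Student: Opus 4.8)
The plan is to show that the explicitly constructed group $G$ of Lemma \ref{structure_of_G}, regarded as a subgroup of $\Sp(2m,q)$, is \emph{itself} the normalizer of a suitable $r$-subgroup, and then to transport this conclusion to an arbitrary non-identity $r$-subgroup by a conjugacy argument. This is essentially a synthesis of the lemmas already proved rather than a computation from scratch.

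First I would carry out the arithmetic bookkeeping on $r$. Since $r$ is a Zsigmondy prime divisor of $q^{2m}-1$, the remark preceding Lemma \ref{power_of_two} shows $r>2m$, so in particular $r$ is odd, and the discussion following that remark shows $r\mid q^m+1$. Because $q^m+1$ is even, the $r$-part of $(q^m+1)/2$ equals the $r$-part of $q^m+1$; and since $r$ divides no $q^j-1$ with $j<2m$, the $r$-part of $|\Sp(2m,q)|=q^{m^2}\prod_{i=1}^m(q^{2i}-1)$ is exactly the $r$-part of $q^{2m}-1=(q^m-1)(q^m+1)$, hence of $q^m+1$. Consequently the order $r^t$ of the given subgroup $C$ divides $|A|=(q^m+1)/2$, where $A$ is the normal cyclic subgroup of $G$ furnished by Lemma \ref{structure_of_G} under the standing hypothesis that $q\equiv 1\bmod 4$ or $m$ is even.

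Next I would produce a concrete normal $r$-subgroup inside $G$. Since $A$ is cyclic and $r^t\mid |A|$, there is a unique subgroup $C_0\leq A$ of order $r^t$; being the unique subgroup of its order in the cyclic group $A$, it is characteristic in $A$, and as $A$ is normal in $G$ we get that $C_0$ is normal in $G$, i.e. $G\leq N_S(C_0)$ where $S=\Sp(2m,q)$. On the other hand $C_0$ is a non-identity $r$-subgroup, so Lemma \ref{symplectic_centralizer} gives $|C_S(C_0)|=q^m+1$ and Lemma \ref{symplectic_normalizer} gives that $N_S(C_0)/C_S(C_0)$ is cyclic of order dividing $2m$, whence $|N_S(C_0)|\mid 2m(q^m+1)=|G|$. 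Combining the two containments forces $N_S(C_0)=G$, which has order $2m(q^m+1)$ and the structure described in Lemma \ref{structure_of_G}.

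Finally I would remove the special choice of subgroup. By Lemma \ref{cyclic} a Sylow $r$-subgroup of $\Sp(2m,q)$ is cyclic, hence has a unique subgroup of order $r^t$; since Sylow's theorem makes all Sylow $r$-subgroups conjugate, every subgroup of $\Sp(2m,q)$ of order $r^t$ is conjugate to $C_0$. Thus $N_S(C)$ is conjugate in $\Sp(2m,q)$ to $N_S(C_0)=G$ for the given $C$, which completes the proof. The only genuinely substantive point is the matching of orders: the upper bound $|N_S(C)|\mid 2m(q^m+1)$ coming from Lemmas \ref{symplectic_centralizer} and \ref{symplectic_normalizer} must coincide \emph{exactly} with $|G|$, and this is where the hypothesis ``$q\equiv 1\bmod 4$ or $m$ even'' is essential — it is precisely what guarantees, via Lemma \ref{structure_of_G}, that $G$ already contains a normal cyclic subgroup of the full odd order $(q^m+1)/2$, and therefore a copy of every $r$-subgroup $C$ of $S$ as a normal subgroup.
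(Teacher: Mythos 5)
Your proposal is correct and follows essentially the same route as the paper: embed a copy of the $r$-subgroup as the characteristic subgroup of the cyclic normal subgroup $A$ of the explicit group $G$, use Lemmas \ref{symplectic_centralizer} and \ref{symplectic_normalizer} to bound $|N_S(C_0)|$ by $2m(q^m+1)=|G|$ and so force equality, then transfer to an arbitrary $r$-subgroup by conjugacy via the cyclicity of Sylow $r$-subgroups. You merely spell out two details the paper leaves implicit (that $r^t$ divides $(q^m+1)/2$, and why all subgroups of order $r^t$ are conjugate), which is fine.
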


The following consequence of this result is important for establishing the structure of certain subgroups of $\Sp(2m,q)$.

\begin{corollary} \label{normal_r_subgroup}
Let $r$ be a Zsigmondy prime divisor of $q^{2m}-1$ and let $H$ be a subgroup of $\Sp(2m,q)$ whose order is divisible by $r$. Suppose that $H$ contains a non-identity normal
$r$-subgroup. Then if $q\equiv 1\bmod 4$ or if $m$ is even, $H$ is isomorphic to  a subgroup
of the group $G$ described in Lemma \ref{structure_of_G}. If $|H|$ is even, a Sylow $2$-subgroup of $H$ is cyclic
and $H$ contains a unique involution, $-I$. If $4$ divides $|H|$, the normalizer of a subgroup of order $4$ in $H$ has order dividing $4m$.
\end{corollary}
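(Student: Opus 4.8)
The plan is to observe that the hypotheses immediately place $H$ inside the normalizer of an $r$-subgroup, after which every assertion is read off from Lemma~\ref{structure_of_G}. Write $S=\Sp(2m,q)$ and let $R$ be a non-identity normal $r$-subgroup of $H$, which exists by hypothesis and in particular forces $r\mid|H|$. Since $R\trianglelefteq H$, every element of $H$ normalizes $R$, so $H\le N_S(R)$. Under the standing hypothesis $q\equiv1\bmod4$ or $m$ even, Corollary~\ref{normalizer_of_r_subgroup} tells us that $N_S(R)$ is isomorphic to the group $G$ of Lemma~\ref{structure_of_G}. Hence $H$ is isomorphic to a subgroup of $G$; I would identify $H$ with such a subgroup from now on.

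Next I would transfer the three structural facts from $G$ to $H$. Lemma~\ref{structure_of_G} asserts that a Sylow $2$-subgroup of $G$ is cyclic; a Sylow $2$-subgroup of $H$ is contained in a Sylow $2$-subgroup of $G$, hence is itself cyclic. If $|H|$ is even, $H$ contains an involution, and since $-I$ is the unique involution of $G$, that involution must equal $-I$; in particular $-I\in H$ and it is the unique involution of $H$.

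For the last point, suppose $4\mid|H|$ and let $Y\le H$ have order $4$. Then $Y$ is also a subgroup of order $4$ of $G$, so $N_G(Y)$ has order $4m$ by Lemma~\ref{structure_of_G}; since $N_H(Y)=H\cap N_G(Y)\le N_G(Y)$, its order divides $4m$. Because a Sylow $2$-subgroup of $H$ is cyclic it contains exactly one subgroup of order $4$, and every order-$4$ subgroup of $H$ is the order-$4$ subgroup of some Sylow $2$-subgroup; hence all such subgroups are $H$-conjugate, and the bound $|N_H(Y)|\mid 4m$ does not depend on the choice of $Y$.

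I do not expect a genuine obstacle: the corollary is essentially a dictionary translation from Corollary~\ref{normalizer_of_r_subgroup} together with the explicit description of $G$ in Lemma~\ref{structure_of_G}. The only spot needing a line of care is the conjugacy of the order-$4$ subgroups of $H$, which is needed to make ``the normalizer of a subgroup of order $4$'' well defined, and that follows at once from the cyclicity of the Sylow $2$-subgroups established in the previous paragraph.
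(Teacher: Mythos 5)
Your argument is correct and is exactly the route the paper intends: the corollary is stated there without a separate proof, being precisely the observation that a normal non-identity $r$-subgroup forces $H\le N_S(R)$, which by Corollary~\ref{normalizer_of_r_subgroup} is isomorphic to the group $G$ of Lemma~\ref{structure_of_G}, after which the cyclic Sylow $2$-subgroup, the unique involution $-I$, and the bound $4m$ on the normalizer of an order-$4$ subgroup transfer to $H$ just as you describe.
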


It is perhaps worth pointing out that if $q\equiv 3\bmod 4$ and $m$ is odd, a Sylow 2-subgroup of the group described in Corollary \ref{normal_r_subgroup} may be generalized quaternion and the normalizer of a subgroup of order 4 may have order greater than $4m$.
This can already be seen when $m=1$. As the cyclicity and normalizer statements are key points in our subsequent argument, it is important to observe how exceptions can occur.

In the proof that follows, given a finite group $G$ and a prime divisor $r$ of $|G|$, we let $O_r(G)$ denote the largest normal subgroup of $G$ whose order is a power of $r$.

\begin{theorem} \label{solvable_group}
Let $G$ be a solvable subgroup of $\GL(n,q)$. Suppose that $|G|$ is divisible by  a Zsigmondy prime divisor $r$, say, of $q^{n}-1$. Let $R$ be a Sylow $r$-subgroup of $G$. Then
$R$ is normal in $G$, unless possibly $n=2^a$ and $r=2^a+1$ is a Fermat prime.
\end{theorem}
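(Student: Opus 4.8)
We may assume $n\ge 2$, since $\GL(1,q)$ is abelian and the statement then holds trivially; note that $r$ is then odd, because $2\mid q-1$. Since $R\ne 1$ is an $r$-subgroup of $\GL(V)$ and $r$ is a Zsigmondy divisor of $q^{n}-1$, Lemma~\ref{irreducible_action} shows that $R$, hence $G$, acts irreducibly on $V=\mathbb F_q^{n}$; in particular $O_p(G)=1$, as a non-trivial normal $p$-subgroup would have a non-zero $G$-invariant fixed subspace. The plan is the familiar Fitting-subgroup argument for solvable linear groups. Put $F=F(G)$, so $C_G(F)\le F$ by solvability, and $F=O_r(G)\times F_0$ with $F_0=\prod_{\ell\ne r}O_\ell(G)$. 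By Lemma~\ref{cyclic} the group $R$ is cyclic, so it centralizes $O_r(G)$, which lies in $R$. Hence, if $R$ also centralizes $F_0$ then $R\le C_G(F)\le F$, forcing $R=O_r(G)$, a normal subgroup, and we are done. So from here on we assume $R$ acts non-trivially on $P:=O_\ell(G)$ for some prime $\ell\notin\{p,r\}$, and must deduce that $n=2^{a}$ with $r=2^{a}+1$ a Fermat prime.

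The first step is to fix the structure of $V$ as a $P$-module. Since $P$ is normal in $G$, $R$ permutes the homogeneous components of the semisimple module $V|_P$; their number is at most $n<r$ and $R$ is a cyclic $r$-group, so $R$ fixes each, and the irreducibility of $R$ then leaves exactly one. Thus $V|_P$ is homogeneous and $D:=\End_{\mathbb F_q[P]}(V)\cong M_e(E)$ for a finite field $E\supseteq\mathbb F_q$ with $[E:\mathbb F_q]\mid n$; moreover $P$ has a faithful irreducible $\mathbb F_q$-module, so $Z(P)$ is cyclic and $P$ is an $\ell$-group of symplectic type. The key leverage point is that $R$ normalizes $P$, hence acts on the algebra $D$, and so on $Z(D)\cong E$ by $\mathbb F_q$-algebra automorphisms; since $r>n\ge[E:\mathbb F_q]$ this action is trivial, so $R$ centralizes $Z(D)$. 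Because $Z(P)$ acts on $V$ through scalars lying in $Z(D)$, it follows that $R$ centralizes $Z(P)$; and were $P$ abelian we would have $P\subseteq Z(D)^{\times}$ and $[R,P]=1$, a contradiction. So $P$ is non-abelian, and in the principal case it is a central product of an extraspecial group of order $\ell^{1+2k}$ ($k\ge1$) with a cyclic group, with $P/Z(P)\cong\mathbb F_\ell^{2k}$.

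The conclusion now follows from a short Zsigmondy-type count. On the one hand, a faithful $\mathbb F_q[P]$-module has $\mathbb F_q$-dimension at least $\ell^{k}$, since the faithful irreducible characters of an extraspecial group of order $\ell^{1+2k}$ have degree $\ell^{k}$; thus $n\ge\ell^{k}$. On the other hand, $R$ acts on $P/Z(P)\cong\mathbb F_\ell^{2k}$ preserving the alternating form given by commutators, which takes values in $\Omega_1(Z(P))\cong\mathbb F_\ell$, on which $R$ acts trivially; and this action is non-trivial, because by coprime action triviality on $P/Z(P)$ together with triviality on $Z(P)$ would force triviality on $P$. Hence $r$ divides $|\Sp(2k,\ell)|=\ell^{k^{2}}\prod_{i=1}^{k}(\ell^{2i}-1)$, so $r\mid\ell^{2i}-1=(\ell^{i}-1)(\ell^{i}+1)$ for some $i\le k$. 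Since $\ell^{i}\le\ell^{k}\le n<r$ and $0<\ell^{i}-1<r$, the prime $r$ divides $\ell^{i}+1$; as also $r>\ell^{i}$, this gives $r=\ell^{i}+1$, and then $\ell^{i}+1=r\ge n+1\ge\ell^{k}+1\ge\ell^{i}+1$, whence $i=k$, $n=\ell^{k}$ and $r=\ell^{k}+1$. Finally $r$ is odd, so $\ell^{k}$ is even, $\ell=2$, and $n=2^{k}$ with $r=2^{k}+1$ prime, i.e. a Fermat prime --- exactly the excluded case, taking $a=k$.

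The step I expect to be the main obstacle is the structural middle one: pinning down $P$ as an $\ell$-group of symplectic type and pulling from that just what the count needs --- cyclic centre, the commutator form on $P/Z(P)$, the degree $\ell^{k}$ of the faithful characters --- and in particular handling the prime $\ell=2$ uniformly, where $P$ may also involve dihedral, semidihedral or generalized-quaternion factors (these are eliminated because their contribution to $n$ exceeds their contribution to $r$, again by $r>n$) and where the commutator form is the polarization of a quadratic form, so that $R$ maps into $\mathrm{O}^{\pm}(2k,2)\le\Sp(2k,2)$ rather than into an odd-characteristic symplectic group. Everything after that is routine: only the divisibility $r\mid|\Sp(2k,\ell)|$ and the inequality $n\ge\ell^{k}$ are actually used, and the whole scheme rests on the single fact $r>n$ for Zsigmondy primes, which is what simultaneously makes $R$ act irreducibly and makes $R$ centralize the various fields and centres above.
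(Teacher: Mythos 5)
Your route is genuinely different from the paper's: the paper quotes Ito's theorem (Huppert--Blackburn, Theorem 8.5) to settle the case $p\nmid |G|$ and then handles $p\mid |G|$ through Hall $\{p,r\}$- and Hall $p'$-subgroups together with the Hall--Higman lemma, whereas you reprove the Ito-type statement from scratch by a Fitting-subgroup argument, disposing of the prime $p$ at the outset via $O_p(G)=1$. The skeleton and the closing count are correct, but one step fails as written: you deduce that $P=O_\ell(G)$ is of symplectic type from the fact that $P$ has a faithful irreducible $\mathbb{F}_q$-module. That implication is false. The group $C_\ell\wr C_\ell$ has cyclic centre and a faithful irreducible module, yet its base group is a non-cyclic characteristic abelian subgroup, so it is not of symplectic type. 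A faithful irreducible module gives you only that $Z(P)$ is cyclic, and that alone does not yield the decomposition $P=E*C$, the isomorphism $P/Z(P)\cong\mathbb{F}_\ell^{2k}$, or the non-degenerate commutator form with values in $\Omega_1(Z(P))$ on which your entire count rests; for a $2$-group of maximal class, for instance, $P'$ is not central and the commutator map on $P/Z(P)$ is not even bilinear.

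The gap is repairable with a tool you already used: run the homogeneity argument on an arbitrary characteristic abelian subgroup $A$ of $P$, not just on $Z(P)$. Such an $A$ is normal in $G$ and of order prime to $p$ (its $p$-part would lie in $O_p(G)=1$), so $V$ restricted to $A$ is semisimple with fewer than $r$ homogeneous components, $R$ fixes each, and irreducibility of $R$ forces a single one; hence $A$ has a faithful irreducible module and is cyclic. Only at that point does P.~Hall's theorem deliver symplectic type, and your argument can resume. A second, smaller debt is the $\ell=2$ configuration in which the cyclic central factor is replaced by a dihedral, semidihedral or generalized quaternion group of order at least $16$: your one-line remark that "their contribution to $n$ exceeds their contribution to $r$" is not an argument, and the usual treatment (as in the source the paper cites for Theorem 8.5) exploits that the automorphism groups of those factors are $2$-groups, so that the odd-order group $R$ must still act non-trivially on an extraspecial section, where your count then applies. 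With these two patches your proof is complete and self-contained, and it buys something the paper's does not: it avoids citing Ito's theorem as a black box and treats the characteristic $p$ uniformly rather than by the Hall-subgroup splicing.
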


\begin{proof}
Recall that as a Zsigmondy prime divisor of $q^{n}-1$, $r$ is greater than $n$. The theorem then follows from a result of Ito, \cite{Hup2}, Theorem 8.5, p.509,  if $p$, the characteristic of $\mathbb{F}_q$, does not divide $|G|$. 

Suppose next that $p$ divides $|G|$. Let $M$ be a Hall $\{p, r\}$-subgroup of $G$, which we may assume contains $R$. $R$ acts irreducibly on $V$, and hence so also does $M$. Moreover, Lemma \ref{cyclic} shows that $R$ is cyclic.

Since $M$ acts irreducibly on $V$, $O_p(M)=1$ by \cite{Hup1}, Satz 5.17,  p.485. Thus $O_r(M)>1$. The Hall-Higman lemma, \cite{Is}, Lemma 14.22, p.256,  implies that
$O_r(M)$ contains its centralizer in $M$. Since $O_r(M)$ is contained in $R$, and hence is centralized by
$R$, $O_r(M)=R$. Thus, $R$ is normal in $M$.

Let $\pi$ be the set of prime divisors of $|G|$ with the exception of $p$. Let $H$ be a Hall
$\pi$-subgroup of $G$. We may assume that $R$ is contained in $H$. The argument above implies that $R$ is normal in $H$ unless possibly $n=2^a$ and $r=2^a+1$ a Fermat prime. Then, if the exceptions above do not occur, $R$ is normal in both $H$ and $M$ and hence also in $HM=G$. 
\end{proof}

We proceed to use Theorem \ref{solvable_group} to obtain our main working theorem concerning solvable subgroups of $\Sp(2m,q)$  whose order is divisible by $q^m+1$. 

\begin{theorem} \label{solvable_subgroup_of_the_symplectic_group}
Let $G$ be a solvable subgroup of $\Sp(2m,q)$, where $q$ is odd and $m\geq 2$.
Suppose also that $q\equiv 1\bmod 4$ or that $m$ is even. Suppose furthermore that $q^m+1$ divides $|G|$. Then there is at least one Zsigmondy prime divisor $r$ of $q^{2m}-1$ such
that a Sylow $r$-subgroup of $G$ is normal in $G$, unless possibly $q=3$ and $m=2$.
\end{theorem}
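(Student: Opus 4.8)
The plan is to apply Theorem~\ref{solvable_group} to $G$ viewed inside $\GL(2m,q)$, reduce to a short list of small cases, and treat those by hand. Since $m\ge2$ we have $2m\ge4$, so Zsigmondy's theorem supplies at least one Zsigmondy prime divisor of $q^{2m}-1$; as observed earlier, every such prime divides $q^m+1$ and hence divides $|G|$. Theorem~\ref{solvable_group}, with $n=2m$, now says that for any such prime $r$ a Sylow $r$-subgroup of $G$ is normal in $G$ unless $2m=2^a$ and $r=2^a+1$ is a Fermat prime. Hence the assertion follows at once when $2m$ is not a power of $2$, and also when $2m=2^a$ but $2^a+1$ is composite. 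I may therefore assume $m=2^{a-1}$ with $a\ge2$ and that $p_0:=2m+1$ is a Fermat prime (so $a\in\{2,4,8,16\}$ and $p_0\in\{5,17,257,65537\}$); it then suffices to produce a Zsigmondy prime divisor $r\neq p_0$ of $q^{2m}-1$, since by Theorem~\ref{solvable_group} such an $r$ automatically has a normal Sylow $r$-subgroup in $G$.

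By Lemma~\ref{power_of_two}, applied with exponent $a-1$, every odd prime divisor of $q^m+1$ is a Zsigmondy prime divisor of $q^{2m}-1$; so I am done unless the only odd prime divisor of $q^m+1$ is $p_0$. In that case $q^m+1=2^e p_0^{\,c}$, and since $q$ is odd and $m$ is even we have $q^m\equiv1\bmod8$, forcing $e=1$, while $c\ge1$ because $q^m+1\ge10$. Thus I am reduced to the Diophantine equation
\[
q^m+1=2\,(2m+1)^{c},\qquad m=2^{a-1},\quad 2m+1\ \text{a Fermat prime},\quad c\ge1 .
\]

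To solve it, write $Q=q^{m/2}=q^{2^{a-2}}$ (so $Q=q$ if $a=2$, and $Q\ge q^2\ge9$ if $a\ge3$); the equation becomes $Q^2+1=2p_0^{\,c}$. If $c$ is even, $Q^2-2(p_0^{c/2})^2=-1$ is a Pell equation whose $y$-solutions form the sequence $1,5,29,169,\dots$, so $p_0^{c/2}\in\{1,5\}$; this forces $p_0=5$ and $p_0^{c/2}=5$, giving $(Q,c)=(7,2)$. If $c$ is odd, $Q^2-2p_0(p_0^{(c-1)/2})^2=-1$, and the continued-fraction test shows that $x^2-2p_0y^2=-1$ is solvable only for $p_0=5$, where its $y$-solutions are $1,37,\dots$, so $p_0^{(c-1)/2}=1$ and $(Q,c)=(3,1)$. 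In every case $Q\in\{3,7\}$; as $Q=q^{2^{a-2}}$, this forces $a=2$, $m=2$ and $q\in\{3,7\}$. The fiddly point in this step is marshalling the facts about $x^2-2p_0y^2=\pm1$ for the four admissible primes $p_0$; the rest is routine.

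It remains to handle $q=7$, $m=2$; the case $q=3$, $m=2$ cannot be handled this way and is left as a possible exception, for there the method genuinely fails — $\Sp(4,3)$ has, inside its extraspecial-normalizer maximal subgroup of shape $2^{1+4}.A_5$, a solvable subgroup of order $160$ (a multiple of $q^m+1=10$) with a non-normal Sylow $5$-subgroup. When $q=7$, $m=2$, the hypothesis $q^m+1=50\mid|G|$ forces $25\mid|G|$; since the $5$-part of $|\Sp(4,7)|$ equals $25$, $G$ contains a full Sylow $5$-subgroup $P\cong C_{25}$ of $\Sp(4,7)$. As $\Sp(4,7)$ is not solvable, $G$ lies in a maximal subgroup $M$ of $\Sp(4,7)$ with $25\mid|M|$, and the classification of maximal subgroups of $\Sp(4,7)$ shows that, up to conjugacy, the only one of order divisible by $25$ is $M\cong\SL(2,49).2$. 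Put $G_0=G\cap\SL(2,49)$, of index at most $2$ in $G$, so that $25\mid|G_0|$; being solvable, $G_0$ is proper in $\SL(2,49)$ and hence lies in a maximal subgroup of $\SL(2,49)$ of order divisible by $25$, which is necessarily the normalizer $C_{50}\rtimes C_2$ of a non-split torus. Therefore $G_0$ has a unique, cyclic Sylow $5$-subgroup; it is characteristic in $G_0$ and so normal in $G$, and it coincides with $P$. Thus $P$ is a normal Sylow $5$-subgroup of $G$ and $5$ is a Zsigmondy prime divisor of $7^4-1$ of the required kind. The principal obstacle in the whole argument is this last part: it forces one to appeal to the classification of maximal subgroups of $\Sp(4,7)$ (and of $\SL(2,49)$), in addition to the Diophantine analysis above.
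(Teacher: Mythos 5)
Your opening reduction coincides with the paper's: Zsigmondy plus Theorem~\ref{solvable_group} and Lemma~\ref{power_of_two} bring both arguments to the situation $2m=2^a$, $r=2m+1$ a Fermat prime, and $q^m+1=2r^c$. From that point on you diverge, and your route has genuine gaps. First, your case list $p_0\in\{5,17,257,65537\}$ is not justified: it is an open problem whether there are further Fermat primes, so an argument that enumerates them cannot be complete; you would need a bound uniform in $a$. Second, even for the listed primes the Diophantine assertions are not proved. Showing that no Pell-sequence value $y_n$ with $n\geq 3$ in $x^2-2y^2=-1$ (resp.\ $x^2-2p_0y^2=-1$) is a power of $p_0$ requires an actual argument (rank of apparition, or results on $x^2+1=2y^n$ for $n\geq 3$, which lie noticeably deeper than anything else in the paper); "the $y$-solutions are $1,5,29,169,\dots$, so $p_0^{c/2}\in\{1,5\}$" is an assertion, not a proof, and the unsolvability of $x^2-2p_0y^2=-1$ for $p_0=17,257,65537$ is likewise left as an unexecuted computation. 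Third, your treatment of $q=7$, $m=2$ imports the classification of maximal subgroups of $\Sp(4,7)$ and of $\SL(2,49)$ — external machinery the paper deliberately avoids.

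The paper instead stays group-theoretic and never solves the equation $q^m+1=2r^t$ at all. Working in a Hall $\{2,r\}$-subgroup, it shows (via Hall--Higman and Lemma~\ref{symplectic_centralizer}) that if the Sylow $r$-subgroup $R$ is not normal then $O_r=1$, the unique normal abelian $2$-subgroup is $\langle -I\rangle$, and there is a normal extraspecial $2$-group $N$ of order $2^{2k+1}$ with $2^k$ dividing $\dim V=2m$; since $R$ acts without nontrivial fixed points on the nonidentity cosets of $N/\langle -I\rangle$, one gets $|R|=(q^m+1)/2\leq 2^k+1\leq 2m+1$. This single inequality is uniform in the (possibly unknown) Fermat prime, disposes of your $q=7$ case for free ($25\not\leq 5$), and isolates $q=3$, $m=2$ as the only survivor. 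If you want to salvage your approach, the missing ingredient is precisely some such a priori bound on the $r$-part of $|G|$; without it the Pell analysis cannot be closed off.
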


\begin{proof}
We first observe that $q^m+1\equiv 3 \bmod 4$ under the hypothesis that $q\equiv 1 \bmod 4$
or $m$ is even. Let $r$ be a Zsigmondy prime divisor of $q^{2m}-1$ and let $R$ be a Sylow $r$-subgroup of $G$. Then $R$ is cyclic and non-trivial, and $R$ is normal in $G$, unless possibly $m=2^b$ and $r=2^{b+1}+1$ is a Fermat prime, by Theorem \ref{solvable_group}. 

Suppose then that $R$ is not normal in $G$. Then we take $m=2^b$ and $r=2^{b+1}+1$. Lemma 
\ref{power_of_two} shows that any odd prime divisor of $q^{2^b}+1$ is a Zsigmondy prime divisor of $q^{2^{b+1}}-1$, and $r$ is such a prime. Suppose if possible that $r_1$ is a different  odd prime divisor of $q^{2^b}+1$. Then a Sylow $r_1$-subgroup of $G$ is also cyclic, non-trivial, and normal in $G$ by Theorem \ref{solvable_group}, since $r_1\neq r$. Thus we are finished unless the only odd prime divisor of $q^{2^b}+1$ is $r$. 

We may now assume that $q^{2^b}+1=2r^t$, where $t$ is a positive integer. Let $M$ be a
Sylow 2-complement of $G$ that contains $R$. Theorem 8.15, p.509,  of \cite{Hup2}, together with the argument used to prove Theorem \ref{solvable_group}, shows that $R$ is normal in $M$. Let
$H$ be a Hall $\{2,r\}$-subgroup of $G$ containing $R$. We intend to show that $R$ is normal in $H$, and this will imply that $R$ is normal in $G$, with a single possible exception when
$q=3$ and $m=2$.

For the sake of this proof, we may therefore assume that $G=H$ and thus the only prime divisors of $|G|$ are 2 and $r$. We claim that $O_r(G)=1$. For suppose that $T=O_r(G)\neq 1$. Now Lemma \ref{symplectic_centralizer} implies that the centralizer of $T$ in $\Sp(2m,q)$ has order $q^m+1$. Since the 2-part of $q^m+1$ is 2 under
our hypotheses, it follows that the only elements of order a power of 2 in 
$\Sp(2m,q)$ that centralize $T$ are $\pm I$. 

The centralizer $C_G(T)$ of $T$ in $G$ is normal in $G$, since $T$ is normal, and it contains $R$, since $T$ is contained in $R$ and $R$ is cyclic. In view of our observation above on the elements of 2-power order in $\Sp(2m,q)$ that centralize $T$, we see that 
$C_G(T)$ either equals $R$ or it is the direct product of $R$ with the cyclic group generated by $-I$. In either case, $R$ is characteristic in $C_G(T)$ and hence normal in $G$, contrary to hypothesis. Thus, $O_r(G)=1$, as claimed.

Since $G$ is solvable and $O_r(G)=1$, it follows that $O_2(G)\neq 1$. Therefore, $G$ contains a non-trivial normal abelian 2-subgroup, $Z$, say. Let $R_1$ be the subgroup of order
$r$ in $R$ and let $K=ZR_1$. $K$ acts irreducibly on $V$, since $R_1$ does so. Clifford's theorem applied to $K$ implies that if $V_Z$ denotes $V$ considered as a $Z$-module, we have
\[
V_Z=V_1\oplus \cdots \oplus V_k,
\]
where each $V_i$ is a direct sum of isomorphic irreducible $Z$-modules, and the $V_i$ are permuted transitively by $R_1$. Since $|R_1|=r$, it follows that $k=1$ or $k=r$. However, if $k=r$, it follows that $\dim V=r\dim V_1\geq r$ and this is impossible, since $\dim V<r$.
Thus $k=1$, and we deduce that $V_Z$ is a direct sum of isomorphic irreducible $Z$-modules.

 This implies that $Z$ has a faithful irreducible module (since $V$ is necessarily faithful for $Z$) and is thus cyclic. See, for example, Theorem 3.2, p.267, of \cite{Hup2}. Now $R_1$ acts by conjugation on $Z$ and induces an automorphism of order one or $r$ on $Z$. The automorphism must then be trivial, since the automorphism group of a cyclic 2-group is itself a 2-group. This implies that $R_1$ centralizes $Z$. The earlier part of this argument shows that, since $R_1$ acts irreducibly,
 the only non-trivial element of 2-power order in $\Sp(2m,q)$ that centralizes $R_1$ is $-I$, and thus $Z$ is the cyclic group generated by $-I$. 
 
 Now since $O_r(G)$ is trivial, the Hall-Higman lemma  previously quoted implies that $O_2(G)$ contains its centralizer in $G$ and hence $O_2(G)\neq Z$. Let then $N/Z$ be a minimal normal subgroup
 of $G/Z$ contained in $O_2(G)/Z$. $N/Z$ is an elementary abelian 2-group and thus
 $N'\leq Z$, where $N'$ denotes the commutator subgroup of $N$. $N$ is not abelian, since we have just shown that the only normal abelian subgroup of 2-power order in $G$ is $Z$, and so we must have $N'=Z$. Furthermore, let $Z(N)$ denote the centre of $N$. $Z(N)$ contains $Z$ and is a characteristic abelian subgroup of $N$. Therefore, $Z(N)$ is a normal abelian 2-subgroup of $G$ and hence equals $Z$, by our earlier arguments. Thus, $N'=Z(N)$ and it follows that $N$ is an extra-special 2-group. 
 
 We have 
 $|N/Z|=2^{2k}$ for some positive integer $k$ and $N$ has a unique irreducible faithful representation, which has degree $2^k$. See, for example, the proof of Theorem 8.5, p.511, of \cite{Hup2}. This representation is defined over $\mathbb{F}_p$, and it follows by Clifford's theorem that $V_N$ is a direct sum of copies of this unique faithful irreducible representation. Thus $2^k$ divides $\dim V=2^{b+1}$ and we deduce that
 $k\leq b+1$.
 
 The group $R$ acts as a group of automorphisms of $N/Z$ in the following way: given $g\in R$ and coset $xZ$ of $N/Z$, $g$ sends $xZ$ to $gxg^{-1}Z$. We claim that no non-identity element $g$ of $R$ fixes a coset $xZ$ different from $Z$. For suppose that we have
 \[
 gxg^{-1}Z=xZ.
 \]
 Then $gxg^{-1}=x(-I)^d$, where $d=0$ or $d=1$. Then $g^2 x g^{-2}=x$ and we see that
 $g^2$ commutes with $x$. Since $g$ has odd order, $g$ commutes with $x$. But $x$ has order a power of 2 and our previous arguments now show that 
 if $g\neq 1$, then $x=\pm I$ and hence $xZ=Z$. This establishes our claim.
 
 We deduce that the $2^{2k}-1$ cosets of $N/Z$ different from $Z$ are the disjoint union of
  $R$-orbits each of size $|R|$. Thus, $|R|$ divides $2^{2k}-1=(2^k-1)(2^k+1)$. This certainly implies that $|R|\leq 2^k+1$ and thus $|R|\leq 2^{b+1}+1$. 
  
  Recall that we have already established that $|R|=(q^{2^b}+1)/2$ and our inequality above
  implies that
  \[
  (q^{2^b}+1)/2\leq 2^{b+1}+1.
  \]
The trivial estimate $q>2$ leads to the inequality
\[
2^{2^b}< 2^{b+2}+1
\]
and hence $2^b\leq b+2$. It follows that $b\leq 2$.

The estimate $q\geq 3$ shows that we cannot have $b=2$, and the possibility that $b=0$ is excluded by our assumption that $\dim V\geq 4$. This leaves $b=1$ and we have 
\[
q^2+1\leq 10.
\]
Thus $q=3$ and $\dim V=4$, a case that cannot be excluded.

\end{proof}

We remark that $\Sp(4,3)$ contains a subgroup of order 160 which is an extension of an extra-special group of order 32 by a group of order 5, and the existence of this group implies that our estimate in the proof is tight.

\section{Group action on spread}

\noindent Suppose that $q\equiv 1 \bmod 4$ and let $\sigma$ be an element
 in $\Sp(2m,q)$ that satisfies $\sigma^2=-I$. Let $\omega$ be an element in $\mathbb{F}_q$ that satisfies $\omega^2=-1$. Given a $\sigma$-invariant subspace $U$ of $V$, we set
\[
U_\omega(\sigma)=\{ u\in U: \sigma u=\omega u\}, \quad U_{\omega^{-1}}(\sigma)=\{ u\in U: \sigma u=\omega^{-1} u\}. 
\]

$U_\omega(\sigma)$ and $U_{\omega^{-1}}(\sigma)$ are the $\omega$- and $\omega^{-1}$-eigenspaces of $\sigma$ acting on $U$, respectively. For the sake of brevity, we shall denote them simply by $U_\omega$ and $U_{\omega^{-1}}$ when no confusion concerning their dependence on $\sigma$ can arise.
These subspaces have the following well known property.

\begin{lemma} \label{direct_sum_of_isotropics}
Suppose that $q\equiv 1 \bmod 4$. Let $\sigma$ be an element
 in $\Sp(2m,q)$ that satisfies $\sigma^2=-I$ and let $U$ be a $\sigma$-invariant subspace of $V$. Then we have 
\[
U=U_\omega\oplus U_{\omega^{-1}}
\]
and the subspaces 
$U_\omega$ and $U_{\omega^{-1}}$ are both totally isotropic.
\end{lemma}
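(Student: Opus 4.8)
The plan is to exploit the fact that $\sigma$ satisfies $x^2+1=0$, and that over $\mathbb{F}_q$ with $q$ odd and $\omega^2=-1$ this polynomial factors as $(x-\omega)(x-\omega^{-1})$ with $\omega\neq\omega^{-1}$: indeed $\omega^{-1}=-\omega$ and $\omega\neq-\omega$ since the characteristic is odd. From this the eigenspace decomposition and the isotropy assertions both drop out, the only points needing care being that $2$ and $\omega$ are invertible in $\mathbb{F}_q$ (both hold as $p$ is odd) and that the global decomposition restricts to the $\sigma$-invariant subspace $U$.

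First I would prove the direct sum decomposition by exhibiting an explicit idempotent inside $\mathbb{F}_q[\sigma]$. Set $e=\frac{1}{2\omega}(\sigma+\omega I)$, which makes sense since $2\omega\neq 0$. Using $\sigma^2=-I$ and $\omega^{-1}=-\omega$, a direct computation gives $e^2=e$, $\sigma e=\omega e$, and $\sigma(I-e)=\omega^{-1}(I-e)$; hence for every $v\in V$ we have $ev\in V_\omega$ and $(I-e)v\in V_{\omega^{-1}}$, so that $V=V_\omega\oplus V_{\omega^{-1}}$. Since $U$ is $\sigma$-invariant it is invariant under every polynomial in $\sigma$, in particular under $e$, so $eU\subseteq U\cap V_\omega=U_\omega$ and $(I-e)U\subseteq U\cap V_{\omega^{-1}}=U_{\omega^{-1}}$. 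Writing $u=eu+(I-e)u$ for $u\in U$ gives $U=U_\omega+U_{\omega^{-1}}$, and the sum is direct because $U_\omega\cap U_{\omega^{-1}}\subseteq V_\omega\cap V_{\omega^{-1}}=0$.

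Second I would verify total isotropy. For $u,u'\in U_\omega$, using that $\sigma$ is an isometry of $f$ we get $f(u,u')=f(\sigma u,\sigma u')=f(\omega u,\omega u')=\omega^2 f(u,u')=-f(u,u')$, so $2f(u,u')=0$ and hence $f(u,u')=0$ since $q$ is odd; thus $U_\omega$ is totally isotropic. For $u,u'\in U_{\omega^{-1}}$ the same computation uses $\omega^{-2}=(\omega^2)^{-1}=-1$, again yielding $f(u,u')=-f(u,u')=0$, so $U_{\omega^{-1}}$ is totally isotropic as well. I do not anticipate a genuine obstacle here; the one subtlety worth handling carefully is the descent of the eigenspace decomposition to $U$, which is why I prefer to write down the idempotent $e\in\mathbb{F}_q[\sigma]$ explicitly rather than merely invoke diagonalizability of $\sigma$, since then $\sigma$-invariance of $U$ immediately yields $e$-invariance.
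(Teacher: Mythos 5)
Your proposal is correct and is essentially the paper's argument: your idempotent $e=\tfrac{1}{2\omega}(\sigma+\omega I)=\tfrac12(I-\omega\sigma)$ is exactly the projection implicit in the paper's identity $2u=(u-\omega\sigma u)+(u+\omega\sigma u)$, and your total-isotropy computation $f(u,u')=\omega^2 f(u,u')=-f(u,u')$ is the same as the paper's. The only difference is presentational, in that you package the decomposition as an idempotent in $\mathbb{F}_q[\sigma]$ to make the restriction to the $\sigma$-invariant subspace $U$ explicit.
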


\begin{proof}
Let $u$ be any element of $U$. We can write
\[
2u=u-\omega \sigma u+u+\omega \sigma u.
\]
Now we have 
\[
\sigma(u -\omega \sigma u)=\sigma u+\omega u=\omega(u -\omega \sigma u)
\]
and likewise 
\[
\sigma(u +\omega \sigma u)=\sigma u-\omega u=\omega^{-1}(u +\omega \sigma u)
\]
These two equations imply that $U=U_\omega\oplus U_{\omega^{-1}}$. 

Consider now arbitrary elements $u$ and $w$ in $U_\omega$. We have then
\[
f(u,w)=f(\sigma u, \sigma w)=f(\omega u,\omega w)=-f(u,w)
\]
and this shows that $f(u,w)=0$. Thus $U_\omega$ is totally isotropic, and a similar proof yields the same conclusion for $U_{\omega^{-1}}$. 
\end{proof}

\begin{corollary} \label{both_have_dimension_m}
Suppose that $q\equiv 1 \bmod 4$. Let $\sigma$ be an element
 in $\Sp(2m,q)$ that satisfies $\sigma^2=-I$. Then the subspaces 
$V_\omega$ and $V_{\omega^{-1}}$ both have dimension $m$.
\end{corollary}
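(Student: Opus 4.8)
The plan is to read off the result from Lemma~\ref{direct_sum_of_isotropics} applied with $U=V$, combined with the dimension bound for totally isotropic subspaces recorded in the introduction. First I would invoke Lemma~\ref{direct_sum_of_isotropics} with $U=V$ (noting that $V$ is trivially $\sigma$-invariant), which gives the internal direct sum decomposition $V=V_\omega\oplus V_{\omega^{-1}}$ with both summands totally isotropic with respect to $f$. Taking dimensions, this yields $\dim V_\omega+\dim V_{\omega^{-1}}=\dim V=2m$.

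Next I would appeal to the fact, stated at the start of the paper, that since $f$ is non-degenerate the largest possible dimension of a totally isotropic subspace of $V$ is $m$. Hence $\dim V_\omega\le m$ and $\dim V_{\omega^{-1}}\le m$. Since two non-negative integers each at most $m$ sum to $2m$ only when both equal $m$, we conclude $\dim V_\omega=\dim V_{\omega^{-1}}=m$, as required.

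There is essentially no obstacle here: the corollary is a direct packaging of the previous lemma with the standard upper bound on isotropic dimension, and the only thing one must be careful about is that the decomposition is genuinely a direct sum (so that the dimensions add without overlap), which is exactly the content of Lemma~\ref{direct_sum_of_isotropics}. One could also remark, if desired, that this shows $V_\omega$ and $V_{\omega^{-1}}$ are a complementary pair of maximal totally isotropic subspaces, i.e.\ $\sigma$ gives a splitting of $V$ into a hyperbolic pair of Lagrangians, but that observation is not needed for the statement.
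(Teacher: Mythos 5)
Your proposal is correct and is essentially identical to the paper's own proof: both apply Lemma~\ref{direct_sum_of_isotropics} with $U=V$ to get $\dim V_\omega+\dim V_{\omega^{-1}}=2m$, then use the bound $\dim\le m$ for totally isotropic subspaces to force equality. No issues.
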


\begin{proof}
Since $V_\omega$ and $V_{\omega^{-1}}$ are both totally isotropic, they both have dimension
at most $m$. However, since 
\[
2m=\dim V=\dim V_\omega+ \dim V_{\omega^{-1}},
\]
it is clear that we must then have $\dim V_\omega=\dim V_{\omega^{-1}}=m$.
\end{proof}

\begin{corollary} \label{three_subspaces}
Suppose that $q\equiv 1 \bmod 4$, Let $\sigma$ be an element
 in $\Sp(2m,q)$ that satisfies $\sigma^2=-I$. Let $X$, $Y$ and $Z$ be three different $\sigma$-invariant  totally isotropic $m$-dimensional subspaces of $V$ that satisfy $X\cap Y=Y\cap Z=Z\cap X=0$. Then $m$ is even 
 and the six subspaces
 $X_\omega$, $X_{\omega^{-1}}$, $Y_\omega$, $Y_{\omega^{-1}}$, 
 $Z_\omega$, and $Z_{\omega^{-1}}$ all have dimension $m/2$.
 \end{corollary}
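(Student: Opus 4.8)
The plan is to pass to the eigenspaces of $\sigma$ and carry out a dimension count inside $V_\omega$ and $V_{\omega^{-1}}$. First I would apply Lemma \ref{direct_sum_of_isotropics} to each of the $\sigma$-invariant subspaces $X$, $Y$, $Z$ in turn, obtaining $X = X_\omega \oplus X_{\omega^{-1}}$ and likewise for $Y$ and $Z$, with all six summands totally isotropic. By their very definition $X_\omega \subseteq V_\omega$ and $X_{\omega^{-1}} \subseteq V_{\omega^{-1}}$, and similarly for $Y$ and $Z$; so the three subspaces $X_\omega, Y_\omega, Z_\omega$ lie inside $V_\omega$ and the three subspaces $X_{\omega^{-1}}, Y_{\omega^{-1}}, Z_{\omega^{-1}}$ lie inside $V_{\omega^{-1}}$, each of which has dimension $m$ by Corollary \ref{both_have_dimension_m}.

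Next I would set $a = \dim X_\omega$, $b = \dim Y_\omega$, $c = \dim Z_\omega$ and write $a', b', c'$ for the dimensions of the corresponding $\omega^{-1}$-eigenspaces. From the direct-sum decompositions and $\dim X = \dim Y = \dim Z = m$ we get $a + a' = b + b' = c + c' = m$. The hypothesis $X \cap Y = 0$ forces $X_\omega \cap Y_\omega = 0$ and $X_{\omega^{-1}} \cap Y_{\omega^{-1}} = 0$, since $X_\omega \subseteq X$, $Y_\omega \subseteq Y$, and so on; hence $a + b \le \dim V_\omega = m$, while the inequality $a' + b' \le m$ reads $(m - a) + (m - b) \le m$, that is $a + b \ge m$. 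Therefore $a + b = m$. Running the identical argument on the pairs $\{X, Z\}$ and $\{Y, Z\}$ yields $a + c = m$ and $b + c = m$.

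The conclusion is then pure arithmetic: from $a + b = m = a + c$ we get $b = c$, and $b + c = m$ gives $2b = m$, so $m$ is even and $b = m/2$; then $a = m - b = m/2$ and $c = m/2$, and hence also $a' = b' = c' = m/2$. This is precisely the statement.

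I do not expect a genuine obstacle. The one point that needs care is that the trivial-intersection hypothesis must be exploited for all three pairs of subspaces and, for each pair, in \emph{both} $V_\omega$ and $V_{\omega^{-1}}$: it is the interplay between the upper bound $a + b \le m$ coming from $V_\omega$ and the lower bound $a + b \ge m$ coming (via $a' = m - a$) from $V_{\omega^{-1}}$ that pins down $a + b$ exactly, and a single pair of subspaces, or a single eigenspace, would not be enough to force $m$ to be even.
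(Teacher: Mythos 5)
Your argument is correct, but it reaches the key relations by a different mechanism than the paper. The paper fixes a $\sigma$-eigenbasis of $X$ and, using that $X$ and $Y$ are complementary totally isotropic subspaces of a non-degenerate symplectic space, constructs the dual basis of $Y$ with $f(x_i,y_j)=\delta_{ij}$; invariance of $f$ under $\sigma$ then forces $\sigma$ to act on the dual basis with reciprocal eigenvalues, giving $\dim X_\omega=\dim Y_{\omega^{-1}}$ and $\dim X_{\omega^{-1}}=\dim Y_\omega$ directly, and the same for the other two pairs. You instead work inside the global eigenspaces: since $X_\omega,Y_\omega\subseteq V_\omega$ and $X_{\omega^{-1}},Y_{\omega^{-1}}\subseteq V_{\omega^{-1}}$, each of dimension $m$ by Corollary \ref{both_have_dimension_m}, the trivial intersections give $a+b\le m$ and $(m-a)+(m-b)\le m$, hence $a+b=m$ — which is exactly the paper's relation $\dim X_\omega=\dim Y_{\omega^{-1}}$ in disguise — and the final arithmetic from the three pairs is identical. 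Your route is slightly more elementary and marginally more general: beyond Lemma \ref{direct_sum_of_isotropics} and Corollary \ref{both_have_dimension_m} it never uses the total isotropy of $X$, $Y$, $Z$ themselves, nor any pairing between them, only dimension counting; it would apply to any three pairwise disjoint $\sigma$-invariant $m$-dimensional subspaces. The paper's dual-basis computation costs a little more but yields finer structural information (an explicit matched pair of eigenbases for $X$ and $Y$), which is the kind of fact it leans on again in the proof of Corollary \ref{action_of_element_of_order_four}. Your closing remark is also on point: a single pair only yields $a+b=m$, and it is genuinely the third subspace that forces $m$ to be even.
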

 
 \begin{proof}
 Let $r=\dim  X_\omega$ and $s=X_{\omega^{-1}}$, where $r+s=m$. We may then find a basis 
 $x_1$, \dots, $x_m$ of $X$ such that $\sigma x_i =\lambda_i x_i$, for $1\leq i\leq m$, where $\lambda_i=\omega$ for $1\leq i\leq r$, and $\lambda_i=\omega^{-1}$ for $r+1\leq i\leq m$. 
 
 Now, since $X$ and $Y$ have dimension $m$ and intersect trivially, $V=X\oplus Y$. Moreover, as $X$ and $Y$ are both totally isotropic and the underlying alternating bilinear form is non-degenerate, there is a basis $y_1$, \dots, $y_m$ of $Y$ that satisfies
 \[
 f(x_i,y_j)=\delta_{ij}, \quad 1\leq i, j\leq m,
 \]
 where $\delta_{ij}$ denotes the usual Kronecker delta. It follows that as $Y$ is $\sigma$-invariant, $\sigma y_i=\mu_i y_i$, where $\mu_i=\omega^{-1}$ for  $1\leq i\leq r$, and $\mu_i=\omega$ for $r+1\leq i\leq m$. It is straightforward then to see that 
 $y_1$, \dots, $y_r$ are a basis of $Y_{\omega^{-1}}$, and $y_{r+1}$, \dots, $y_m$ are a basis of $Y_{\omega}$, and therefore
 \[
 \dim X_\omega=\dim Y_{\omega^{-1}}, \quad \dim X_{\omega^{-1}}=\dim Y_{\omega}.
 \]
 
 When we apply the same argument to $X$ and $Z$, we obtain
  \[
 \dim X_\omega=\dim Z_{\omega^{-1}}, \quad \dim X_{\omega^{-1}}=\dim Z_{\omega}.
 \]
 In the same manner, using $Y$ and $Z$, we obtain
  \[
 \dim Y_\omega=\dim Z_{\omega^{-1}}, \quad \dim Y_{\omega^{-1}}=\dim Z_{\omega}.
 \]
 
 These six equations imply that all six subspaces have the same dimension, and then since
 we have $m=\dim X_\omega+\dim X_{\omega^{-1}}$, we deduce that $m$ is even and $m/2$ is the dimension in common.
 \end{proof}
 
 \begin{corollary} \label{action_of_element_of_order_four}
Suppose that $q\equiv 1 \bmod 4$. Let $\sigma$ be an element
 in $\Sp(2m,q)$ that satisfies $\sigma^2=-I$. Suppose that $\sigma$ permutes the elements of a complete symplectic spread, $\Omega$, say, of $V$. Let $\Omega_\sigma$ denote the subset of elements of $\Omega$ fixed by $\sigma$. Then either $|\Omega_\sigma|=2$ or $m$ is even and $|\Omega_\sigma|=q^{m/2}+1$. In the former case,
 the subspaces $V_\omega$ and $V_{\omega^{-1}}$ are members of $\Omega_\sigma$, 
 in the latter they are not members of $\Omega$.
 \end{corollary}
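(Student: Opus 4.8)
The plan is to study the permutation action of $\sigma$ on $\Omega$ together with, for each $\sigma$-fixed member $U$, the eigenspace splitting $U=U_\omega\oplus U_{\omega^{-1}}$ from Lemma~\ref{direct_sum_of_isotropics} and the equalities $\dim V_\omega=\dim V_{\omega^{-1}}=m$ from Corollary~\ref{both_have_dimension_m}. Two preliminary remarks will drive everything. First, since $\sigma^2=-I$ acts trivially on every subspace, the $\sigma$-orbits on $\Omega$ have length $1$ or $2$; as $|\Omega|=q^m+1$ is even ($q$ being odd), it follows that $|\Omega_\sigma|$ is even. Second, if $v$ is a nonzero vector of $V_\omega$ and $W\in\Omega$ is the unique member containing $v$, then $\sigma W$ contains $\sigma v=\omega v\in W$, so $W$ and $\sigma W$ meet non-trivially and hence coincide; thus $W\in\Omega_\sigma$ and $v\in W_\omega$. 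Consequently $V_\omega$ is the union of the subspaces $U_\omega$, $U\in\Omega_\sigma$, and these meet pairwise in $0$ since $U_\omega\cap W_\omega\subseteq U\cap W$. In particular $\Omega_\sigma\ne\emptyset$, so $|\Omega_\sigma|\ge 2$ by the parity remark.

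Next I would extract a dimension relation between fixed members. If $U$ and $W$ are distinct members of $\Omega_\sigma$, then $V=U\oplus W$ with both summands $\sigma$-invariant, so the decomposition respects $\sigma$-eigenspaces: $V_\omega=U_\omega\oplus W_\omega$ and $V_{\omega^{-1}}=U_{\omega^{-1}}\oplus W_{\omega^{-1}}$. Combining with $\dim U_\omega+\dim U_{\omega^{-1}}=\dim U=m$ gives $\dim W_\omega=\dim U_{\omega^{-1}}$ and $\dim W_{\omega^{-1}}=\dim U_\omega$. If $|\Omega_\sigma|\ge 3$, I would apply Corollary~\ref{three_subspaces} to any three of its members to conclude that $m$ is even and that $\dim U_\omega=\dim U_{\omega^{-1}}=m/2$ for every $U\in\Omega_\sigma$. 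The subspaces $U_\omega$, $U\in\Omega_\sigma$, then form a spread of the $m$-dimensional space $V_\omega$ by $(m/2)$-dimensional subspaces, so counting nonzero vectors yields $|\Omega_\sigma|(q^{m/2}-1)=q^m-1$, i.e. $|\Omega_\sigma|=q^{m/2}+1$. Moreover $V_\omega\notin\Omega$: otherwise, being $\sigma$-invariant, it would lie in $\Omega_\sigma$, but its $\omega$-eigenspace is all of $V_\omega$, of dimension $m\ne m/2$; the same argument shows $V_{\omega^{-1}}\notin\Omega$.

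The only remaining possibility is $|\Omega_\sigma|=2$, say $\Omega_\sigma=\{U,W\}$. The covering remark now gives $V_\omega=U_\omega\cup W_\omega$, and since a vector space is never the union of two proper subspaces, one of them --- say $U_\omega$ --- equals $V_\omega$. Then $\dim U_\omega=m$ forces $U_{\omega^{-1}}=0$ and hence $U=V_\omega$, while $\dim W_\omega=m-\dim U_\omega=0$ forces $W=W_{\omega^{-1}}\subseteq V_{\omega^{-1}}$, so $W=V_{\omega^{-1}}$ by comparing dimensions. Thus $\Omega_\sigma=\{V_\omega,V_{\omega^{-1}}\}$, which completes the argument.

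Each individual step is short given the earlier results, so I do not expect a computational obstacle. The point needing the most care is organisational: making sure the dichotomy $|\Omega_\sigma|=2$ versus $|\Omega_\sigma|\ge 3$ is genuinely exhaustive and that the second alternative collapses to exactly $q^{m/2}+1$ --- which is precisely where the parity remark, the covering remark and the eigenspace-dimension bookkeeping must all be kept in step.
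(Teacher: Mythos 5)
Your proof is correct and follows essentially the same route as the paper: nonzero eigenvectors of $\sigma$ force the spread members containing them to be fixed, so the subspaces $U_\omega$ with $U\in\Omega_\sigma$ cover $V_\omega$ and meet pairwise trivially; Corollary~\ref{three_subspaces} then handles $|\Omega_\sigma|\ge 3$ via a spread of $V_\omega$ by $(m/2)$-dimensional subspaces, and the two-element case collapses to $\{V_\omega,V_{\omega^{-1}}\}$. Your minor variations --- the orbit-parity argument for $|\Omega_\sigma|\ge 2$ in place of counting the $2(q^m-1)$ eigenvectors, the direct-sum bookkeeping $V_\omega=U_\omega\oplus W_\omega$ in place of invoking the dual-basis argument from Corollary~\ref{three_subspaces}, and the fact that a vector space is not a union of two proper subspaces in place of the count $q^r-1+q^{m-r}-1=q^m-1$ --- are all sound and change nothing essential.
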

 
 \begin{proof}
 Let $v$ be a non-zero element of $V_\omega$. Then $v$ is in some member of $\Omega$, $X$, say, and then $\sigma v=\omega v$ is in $\sigma X$. Consequently, as
 $\sigma X$ is a subspace, we have $v\in X\cap \sigma X$. Since $\sigma X$ is also a member of $\Omega$, and different members of $\Omega$ have trivial intersection with each other, we must have $\sigma X=X$. A similar
 argument applies to elements of $V_{\omega^{-1}}$.
 
 Now there are precisely $2(q^m-1)$ non-zero vectors that are eigenvectors of $\sigma$ and thus the argument above implies that  $|\Omega_\sigma|\geq 2$.
 Let us suppose that $\sigma$ fixes exactly two subspaces in $\Omega$, $X$ and $Y$, say. It follows that 
 \[
 X \cup Y=V_\omega\cup V_{\omega^{-1}}, \quad X_\omega\cup Y_\omega=V_\omega.
 \]
 
 Let $r=\dim X_\omega$. Then $m-r=\dim V_{\omega^{-1}}$. The proof of Corollary \ref{three_subspaces} shows that $\dim Y_\omega=\dim V_{\omega^{-1}}=m-r$. Thus, since 
 $X_\omega\cup Y_\omega=V_\omega$, we must have
 \[
 q^r-1 +q^{m-r}-1=q^m-1.
 \]
 This is clearly only possible if $r=0$ or $m-r=0$. Now $r=0$ implies that $X= V_{\omega^{-1}}$ and $Y=X_\omega$, whereas $m-r=0$ implies that $X=V_\omega$ and 
$Y= V_{\omega^{-1}}$. Thus the subspaces $V_\omega$ and $V_{\omega^{-1}}$ are members of $\Omega$ in this case.

Suppose instead that $|\Omega_\sigma|>2$. Corollary \ref{three_subspaces} implies that $m$ is even and if $Z$ is any member of $\Omega_\sigma$, $\dim Z_\omega=m/2$. Let $Z^1$, \dots, $Z^t$ be the different elements of $\Omega_\sigma$. Clearly, from the argument above, $Z^i_\omega$ is a subspace of
$V_\omega$ of dimension $m/2$. Conversely, each element of $V_\omega$ is in some $Z^i_\omega$, as we have shown above. Thus, since $Z^i\cap Z^j=0$ if $i\neq j$, the subspaces
$Z^i_\omega$ form a complete (ordinary) spread of subspaces of dimension $m/2$ of $V_\omega$
and thus $t=q^{m/2}+1$.
 \end{proof}
 
 \begin{theorem} \label{no_transitive_action_in_solvable_case}
 Let $G$ be a solvable subgroup of $\Sp(2m,q)$. Suppose that $m\geq 2$ and $q\equiv 1 \bmod 4$. Then $G$ does not act transitively on any complete symplectic spread of $V$.
 \end{theorem}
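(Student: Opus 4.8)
The plan is to argue by contradiction: suppose $G$ acts transitively on a complete symplectic spread $\Omega$, so that $|\Omega|=q^m+1$ divides $|G|$. First I would run this through Section~2. Since $q\equiv 1\bmod 4$ forces $q\ge 5$, the exceptional case $q=3$ of Theorem~\ref{solvable_subgroup_of_the_symplectic_group} cannot occur, so $G$ contains a normal Sylow $r$-subgroup for some Zsigmondy prime divisor $r$ of $q^{2m}-1$. Corollary~\ref{normal_r_subgroup} then applies: $G$ is isomorphic to a subgroup of the metacyclic group of Lemma~\ref{structure_of_G}; as $q^m+1$ is even, $|G|$ is even, so a Sylow $2$-subgroup of $G$ is cyclic and $-I$ is the unique involution of $G$; and whenever $4\mid |G|$, the normalizer in $G$ of a subgroup of order $4$ has order dividing $4m$. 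Note also that $q^m+1\equiv 2\bmod 4$.

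Next I would extract the key ``two orbits'' obstruction. Because a Sylow $2$-subgroup of $G$ is cyclic, Burnside's normal $p$-complement theorem (or, if one prefers, the Hall $2'$-subgroup of the metacyclic group in Lemma~\ref{structure_of_G}, intersected with $G$) gives a normal subgroup $N$ of odd order with $G/N$ a cyclic $2$-group, say of order $2^e$. Since $G$ is transitive on $\Omega$ and $G/N$ is a $2$-group, the $N$-orbits on $\Omega$ all have the same size and their number is a power of $2$; as that common size divides $|N|$ and is therefore odd, while the total is $q^m+1\equiv 2\bmod 4$, there are exactly two $N$-orbits $\Delta_1,\Delta_2$, each of odd size $(q^m+1)/2$. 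The transitive action of $G$ on $\{\Delta_1,\Delta_2\}$ gives a surjection $\varphi\colon G\to C_2$ whose kernel contains $N$ and also $-I$ (which fixes every member of $\Omega$). If a Sylow $2$-subgroup of $G$ had order $2$ it would equal $\langle -I\rangle\le\ker\varphi$, forcing $\ker\varphi$ to have odd index; hence $4\mid |G|$. Now take a Sylow $2$-subgroup $\langle t\rangle$ of order $2^e\ge 4$: then $\varphi(t)$ generates $C_2$, so $t$ interchanges $\Delta_1$ and $\Delta_2$ and thus fixes no member of $\Omega$, while $\sigma:=t^{2^{e-2}}$ has order $4$ with $\sigma^2=-I$.

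To finish I would invoke the dichotomy of Corollary~\ref{action_of_element_of_order_four} for $\sigma$. If $|\Omega_\sigma|=2$, then the two fixed members are $V_\omega(\sigma)$ and $V_{\omega^{-1}}(\sigma)$; but $t$ commutes with $\sigma$, hence preserves the eigenspaces of $\sigma$, so $t$ fixes $V_\omega(\sigma)\in\Omega$ --- contradicting that $t$ fixes nothing in $\Omega$. In the remaining case $m$ is even and $|\Omega_\sigma|=q^{m/2}+1$, and I would count the pairs $(\tau,X)$ with $\tau\in G$ of order $4$, $X\in\Omega$ and $\tau X=X$. All order-$4$ subgroups of $G$ are conjugate (cyclic Sylow $2$-subgroup), so $G$ has $2[G:M]$ elements of order $4$, where $M=N_G(\langle\sigma\rangle)$, and every order-$4$ element fixes exactly $q^{m/2}+1$ members of $\Omega$; on the other hand all point stabilizers are conjugate, so each contains the same number $\mu$ of order-$4$ elements. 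Equating the two counts gives $\mu=2(q^{m/2}+1)\,|G_X|/|M|$, and $\mu\le|G_X|$ forces $2(q^{m/2}+1)\le|M|\le 4m$, i.e.\ $q^{m/2}+1\le 2m$ --- impossible for $q\ge 5$ and $m\ge 2$. Either branch yields a contradiction, so no such transitive action exists.

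The main obstacle, I expect, is recognizing that the two alternatives of Corollary~\ref{action_of_element_of_order_four} genuinely require different arguments: the ``two fixed members'' case is killed geometrically, using that an element commuting with $\sigma$ must fix an eigenspace that happens to lie in $\Omega$, whereas the ``$q^{m/2}+1$ fixed members'' case needs the double count combined with the bound on $|N_G(\langle\sigma\rangle)|$ from Corollary~\ref{normal_r_subgroup}. Pinning down the two-orbit structure of $N$ (and with it the divisibility $4\mid|G|$) is the other place where some care is needed.
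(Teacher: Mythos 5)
Your proof is correct and follows the same overall skeleton as the paper's: reduce via Theorem~\ref{solvable_subgroup_of_the_symplectic_group} to a normal Sylow $r$-subgroup for a Zsigmondy prime $r$, embed $G$ in the metacyclic normalizer of Lemma~\ref{structure_of_G}, produce an element $\sigma$ of order $4$ with $\sigma^2=-I$, and split on the dichotomy of Corollary~\ref{action_of_element_of_order_four}. Two of your steps are executed differently from the paper, though. For the first branch ($|\Omega_\sigma|=2$) the paper argues more directly: $\sigma$ lies in a cyclic, hence abelian, Sylow $2$-subgroup, so $C_G(\sigma)$ contains a full Sylow $2$-subgroup and stabilizes the eigenspace $V_\omega\in\Omega$, forcing the stabilizer to have odd index, which contradicts $2\mid q^m+1$; your detour through the Burnside normal $2$-complement and the two $N$-orbits reaches the same contradiction (and also yields $4\mid|G|$, which the paper gets simply from $-I\in\Gamma$ together with $2\mid[G:\Gamma]$), but the extra machinery is not needed. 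For the second branch the paper shows that $N_G(\langle\sigma\rangle)$ acts transitively on $\Omega_\sigma$ (by conjugating the order-$4$ subgroups $x_i^{-1}Yx_i$ inside the stabilizer, using that its Sylow $2$-subgroup is cyclic), giving $q^{m/2}+1\le|N_G(Y)|/4\le m$; your global double count over all order-$4$ elements is a legitimate alternative that uses the same conjugacy fact and the same $4m$ bound from Corollary~\ref{normal_r_subgroup}, landing on the marginally weaker but equally fatal inequality $q^{m/2}+1\le 2m$. Both variants are sound; the paper's versions are a little leaner.
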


\begin{proof}
Suppose by way of contradiction that $G$ acts transitively on some complete symplectic spread, $\Omega$, say, of $V$. It follows that $q^m+1$ divides $|G|$, and thus $G$ has even order. Theorem \ref{solvable_subgroup_of_the_symplectic_group} implies that there is at least one Zsigmondy prime divisor $r$, say, of $q^{2m}-1$ such that a Sylow $r$-subgroup $R$, say, of $G$ is normal in $G$. 

It follows from the normality of $R$ that $G$ is a subgroup of $N_S(R)$, where $N_S(R)$ denotes the normalizer of $R$ in $\Sp(2m,q)$. Lemma \ref{structure_of_G} now implies that
a Sylow 2-subgroup of $N_S(R)$ is cyclic and $N_S(R)$ contains a unique involution, $-I$.
These two conclusions also apply to $G$, as it is a subgroup of $N_S(R)$. 

Let $Z$ be the central subgroup generated by $-I$. $Z$ clearly fixes each subspace in $\Omega$ and is thus contained in the stabilizer subgroup, $\Gamma$, say, of any given
subspace in $\Omega$. Since $|G:\Gamma|=q^m+1$, we see that 4 divides $|G|$. This implies that $G$ contains elements of order 4, since $G$ has a cyclic subgroup of order divisible by 4. 

Let $\sigma$ be an element of order 4 in $G$. Then as $\sigma^2$ has order 2, $\sigma^2=-I$.
It follows from Corollary \ref{action_of_element_of_order_four} that either $|\Omega_\sigma|=2$ or $m$ is even and $|\Omega_\sigma|=q^{m/2}+1$.

Suppose that the first possibility occurs. Then $\Omega_\sigma$ consists of the two eigenspaces $V_\omega(\sigma)=V_\omega$ and $V_{\omega^{-1}}(\sigma)$, where $\omega$ has order 4 in $\mathbb{F}_q$. But $V_\omega$ is clearly fixed by the centralizer $C_G(\sigma)$ of $\sigma$ in $G$ by virtue of its defining property as an eigenspace. $C_G(\sigma)$ contains a Sylow 2-subgroup of $G$, since the Sylow 2-subgroups are cyclic. This implies that if $\Gamma$ is the stabilizer of $V_\omega$, $|G:\Gamma|$ is odd and hence cannot equal $q^m+1$, which is even. The first possibility considered
has thus led to a contradiction.

It must be the case that $m$ is even and $|\Omega_\sigma|=q^{m/2}+1$. Let $Y$ be the subgroup of $G$ generated by $\sigma$ and let $N_G(Y)$ denote the normalizer of $Y$ in $G$. It is clear that
$N_G(Y)$ acts on $\Omega_\sigma$ and we will show that it acts transitively. 

For let $U_1$, \dots, $U_t$ be the subspaces in $\Omega_\sigma$, where $t=q^{m/2}+1$, and let $\Gamma$ be the stabilizer of $U_1$. Let $U_i=x_i U_1$, $1\leq i\leq t$, where the $x_i$ 
are left coset representatives of $\Gamma$ in $G$. Since $Y$ fixes $U_i=x_iU_1$, $x_i^{-1}Yx_i$ is contained in $\Gamma$. Now the Sylow 2-subgroup of $\Gamma$ is cyclic and it therefore follows from Sylow's theorem that any two subgroups of order 4 in $\Gamma$ are conjugate in $\Gamma$. Thus we must have 
\[
x_i^{-1}Yx_i=y_i^{-1}Yy_i,
\]
where $y_i\in \Gamma$ for $1\leq i\leq t$. It follows that $x_iy_i^{-1}\in N_G(Y)$. Thus if we set $x_iy_i^{-1}=z_i$, then $z_i\in N_G(Y)$ and
\[
U_i=x_iU_1=z_iy_iU_1=z_iU_1.
\]
This proves that the action of $N_G(Y)$ on $\Omega_\sigma$ is transitive. Furthermore, since $Y$ acts trivially on $\Omega_\sigma$, the size of $\Omega_\sigma$ is at most $|N_G(Y):Y|=
|N_G(Y)|/4$.

Referring to Lemma \ref{structure_of_G}, it follows that $N_G(Y)$ is cyclic of order dividing $4m$, and thus $|N_G(Y):Y|\leq m$. We have therefore established that
\[
|\Omega_\sigma|=q^{m/2}+1\leq m
\]
and this inequality is clearly impossible. Thus the second possibility also cannot occur, and we deduce that $G$ does not act transitively on any complete symplectic spread.
\end{proof}

We turn to consideration of the single example of the exceptional behaviour of a solvable group acting on a symplectic spread.

\begin{theorem} \label{exceptional_behaviour_when_p=5}
 Let $G$ be a solvable subgroup of $\Sp(2,q)$, where $q\equiv 1 \bmod 4$. Then $G$ does not act transitively on any complete symplectic spread on $V$ unless $q=5$. The group $\SL(2,3)$ of order $24$ acts transitively on a complete symplectic spread defined on a vector space
 of dimension two over $\mathbb{F}_5$.
 \end{theorem}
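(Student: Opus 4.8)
The plan is to reduce the problem to Dickson's classification of the subgroups of $\mathrm{PSL}(2,q)$. First note that $\Sp(2,q)=\SL(2,q)$ and that, since $V$ has dimension $2$, every one-dimensional subspace is totally isotropic and any two distinct ones meet trivially; as there are exactly $q+1$ of them and their union is $V$, the unique complete symplectic spread on $V$ is the set $\Pi$ of all one-dimensional subspaces, which we identify with $\mathbb{P}^{1}(\mathbb{F}_q)$. Suppose some solvable $G\leq\SL(2,q)$ acts transitively on $\Pi$. Then $q+1$ divides $|G|$, so $|G|$ is even and $G$ contains the unique involution $-I$ of $\SL(2,q)$; since $-I$ fixes every member of $\Pi$, the quotient $\overline{G}=G/\langle -I\rangle$ is a solvable subgroup of $\mathrm{PSL}(2,q)$ that still acts transitively on $\Pi$.

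Next, Dickson's theorem (see, e.g., \cite{Hup1}) tells us that a solvable subgroup of $\mathrm{PSL}(2,q)$ is cyclic, or dihedral, or conjugate to a subgroup of a Borel subgroup $\mathbb{F}_q\rtimes C_{(q-1)/2}$, or isomorphic to $A_4$ or $S_4$; the remaining possibility $A_5$ is not solvable. A cyclic subgroup has order dividing one of $(q-1)/2$, $(q+1)/2$, $p$, each less than $q+1$, so it is not transitive on $\Pi$; a subgroup of a Borel subgroup fixes a point, so it is not transitive; a dihedral subgroup has order at most $q+1$, so transitivity forces its order to be exactly $q+1$, and its rotation subgroup is then cyclic of order $(q+1)/2$, which is a non-split torus $C_{(q+1)/2}$, so $\overline{G}$ is its normalizer $D_{q+1}$. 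Finally, a transitive permutation group on $q+1$ points has a subgroup of index $q+1$, and since $q$ is a prime power with $q\equiv1\bmod 4$, hence $q\geq 5$, checking the subgroup orders of $A_4$ and $S_4$ leaves only $q=5$, where moreover $S_4$ is not a subgroup of $\mathrm{PSL}(2,5)\cong A_5$. Thus when $q\neq5$ the only surviving candidate is $\overline{G}\cong D_{q+1}$.

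The crux is to show that $D_{q+1}$ is \emph{not} transitive on $\mathbb{P}^{1}(\mathbb{F}_q)$ when $q\equiv1\bmod4$. Its rotation subgroup $C=C_{(q+1)/2}$ of index $2$ lies in the non-split torus $T$ of $\mathrm{PGL}(2,q)$, which has order $q+1$ and acts regularly on $\mathbb{P}^{1}(\mathbb{F}_q)$; hence $C$ has exactly two orbits $O_1$ and $O_2$ on $\Pi$, each of size $(q+1)/2$. Let $s\in D_{q+1}\setminus C$; then $s$ is an involution, so a preimage $g$ of $s$ in $\SL(2,q)$ satisfies $g^2=-I$ (it cannot be $\pm I$), whence $g$ has order $4$ with the two distinct eigenvalues $\omega$ and $-\omega$, where $\omega^2=-1$. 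Since $q\equiv1\bmod4$ we have $\omega\in\mathbb{F}_q$, so $g$ is diagonalizable over $\mathbb{F}_q$ and $s$ therefore has exactly two fixed points on $\mathbb{P}^{1}(\mathbb{F}_q)$. As $s$ normalizes $C$ it permutes the pair $\{O_1,O_2\}$, and it cannot interchange them, for then it would be fixed-point-free; so $s$ fixes $O_1$ and $O_2$ setwise. Hence $D_{q+1}$ preserves the partition $\{O_1,O_2\}$ of $\Pi$ and is not transitive. Combined with the previous paragraph, this shows that for $q\equiv1\bmod4$ with $q\neq5$, no solvable subgroup of $\Sp(2,q)$ acts transitively on a complete symplectic spread on $V$.

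For $q=5$, I would produce the example: take $\overline{G}\cong A_4$ inside $\mathrm{PSL}(2,5)\cong A_5$. A point stabilizer in $\mathrm{PSL}(2,5)$ acting on $\Pi$ has order $5\cdot4/2=10<12$, so $A_4$ has no fixed point on $\Pi$; and $A_4$ has no orbit of length $2$ since it has no subgroup of index $2$. If $A_4$ had two orbits of length $3$ on $\Pi$, then some non-identity element of its normal Klein four-subgroup would fix all three points of one of these orbits; but, exactly as in the previous paragraph, every involution of $\mathrm{PSL}(2,5)$ has only two fixed points on $\mathbb{P}^{1}(\mathbb{F}_5)$ because $-1$ is a square in $\mathbb{F}_5$. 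Hence $A_4$ acts transitively on the six members of $\Pi$. The full preimage $G$ of $\overline{G}$ in $\SL(2,5)=\Sp(2,5)$ has order $24$, contains $-I$, and has the quaternion group of order $8$ (the Sylow $2$-subgroup of $\SL(2,5)$) as Sylow $2$-subgroup; hence the central extension $1\to\langle -I\rangle\to G\to A_4\to1$ is non-split and $G\cong\SL(2,3)$. Since $-I$ acts trivially on $\Pi$ while $A_4$ acts transitively, $G\cong\SL(2,3)$ permutes the complete symplectic spread on the two-dimensional space over $\mathbb{F}_5$ transitively. I expect the only real obstacle to be the non-transitivity of $D_{q+1}$; everything else is bookkeeping with Dickson's list.
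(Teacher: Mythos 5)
Your proposal is correct, but it takes a genuinely different route from the paper. The paper's proof is a specialization of its general machinery: since $q\equiv 1\bmod 4$, every odd prime divisor $r$ of $q+1$ is a Zsigmondy prime divisor of $q^2-1$, so unless $r=3$ Theorem~\ref{solvable_group} (Ito's theorem) forces a normal Sylow $r$-subgroup and the normalizer/element-of-order-4 argument of Theorem~\ref{no_transitive_action_in_solvable_case} finishes; in the remaining case $q+1=2\cdot 3^t$ the extra-special/Hall--Higman analysis from the proof of Theorem~\ref{solvable_subgroup_of_the_symplectic_group} (with $\dim V=2$) gives $t=1$, hence $q=5$ and $24\mid |G|$, and the only proper subgroup of $\Sp(2,5)$ of order divisible by $24$ is $\SL(2,3)$, which acts transitively. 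You instead pass to $\overline{G}\le \mathrm{PSL}(2,q)$ and run through Dickson's classification of its solvable subgroups, reducing everything to the single claim that the dihedral normalizer $D_{q+1}$ of the non-split torus is intransitive on $\mathbb{P}^1(\mathbb{F}_q)$ when $q\equiv 1\bmod 4$, which you prove by the fixed-point count for reflections (lifts of order $4$ are diagonalizable since $-1$ is a square), and you construct the $q=5$ example via a transitive $A_4\le A_5$ with preimage $\SL(2,3)$. Your route is self-contained and does not need the Zsigmondy/Ito/Hall--Higman apparatus, at the price of invoking Dickson's full subgroup list; the paper's route is uniform with its higher-dimensional argument and yields the extra observation that $\SL(2,3)$ is the only proper subgroup of $\Sp(2,5)$ of order divisible by $24$, and your fixed-point argument for the reflections is in fact the rank-one shadow of the paper's eigenspace argument for elements of order $4$. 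One small caveat: your assertion that a dihedral subgroup of $\mathrm{PSL}(2,q)$ has order at most $q+1$ holds for the dihedral groups of order $2z$, $z\mid (q\pm 1)/2$, occurring as such in Dickson's list, but for $q\equiv 1\bmod 4$ the Borel subgroup also contains dihedral subgroups of order $2p$; these fix a point and have order not divisible by $q+1$, so they are already excluded by your Borel case and your conclusion that a transitive dihedral subgroup must be $D_{q+1}$ with the non-split torus as rotation subgroup is unaffected, though the sentence should be phrased accordingly.
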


\begin{proof}
Since $q\equiv 1\bmod 4$, $q+1$ is not a power of 2, and any odd prime divisor, $r$, say,
of $q+1$ is a Zsigmondy divisor of $q^2-1$. Then, unless $r=3$, a Sylow $r$-subgroup of $G$ is normal and we may finish the proof as we did in Theorem \ref{no_transitive_action_in_solvable_case}.

There remains the case that the only odd prime divisor of $q+1$ is 3, so that $q+1=2\cdot3^t$. Let $R$ be a Sylow 3-subgroup of $G$, where $|R|=3^t$. The proof of Theorem
\ref{solvable_subgroup_of_the_symplectic_group} shows that $|R|=3$ and hence $q=5$. Moreover, 24 divides $|G|$ in this case.

It is easy to check that the only proper subgroup of $\Sp(2,5)$ of order divisible by 24 is $\SL(2,3)$ and that this group does act transitively on a complete symplectic spread over $\mathbb{F}_5$.
\end{proof}

We conclude our arguments by resolving one further unsettled case. 

\begin{theorem} \label{no_exceptional_behaviour}
Let $G$ be a solvable subgroup of $\Sp(2m,q)$, where $q\equiv 3\bmod 4$ and $m$ is even. Suppose that $G$ acts transitively on a complete symplectic spread of $G$. Then there is at least one Zsigmondy prime divisor $r$ of $q^{2m}-1$ such
that a Sylow $r$-subgroup of $G$ is normal in $G$.
\end{theorem}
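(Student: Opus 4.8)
The plan is to reduce immediately to the single case left open by Theorem~\ref{solvable_subgroup_of_the_symplectic_group} and then to exploit the transitive action on $q^m+1=10$ points together with the structure of the Siegel parabolic of $\Sp(4,3)$. Since $G$ acts transitively on a complete symplectic spread, which has $q^m+1$ members, $q^m+1$ divides $|G|$; as $q$ is odd and $m$ is even (so $m\ge 2$), Theorem~\ref{solvable_subgroup_of_the_symplectic_group} applies and gives the desired conclusion unless $q=3$ and $m=2$. So I would assume $q=3$, $m=2$: then $\dim V=4$, $|\Sp(4,3)|=2^{7}\cdot 3^{4}\cdot 5$, the spread $\Omega$ has $10$ members, the only Zsigmondy prime divisor of $q^{2m}-1=80$ is $r=5$, and a Sylow $5$-subgroup $R$ of $G$ has order $5$ and is non-trivial because $10$ divides $|G|$. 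I want to show $R\trianglelefteq G$; suppose not.

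The first step is to determine the Fitting subgroup. By Lemma~\ref{irreducible_action}, $R$ acts irreducibly on $V$. Hence $O_{5}(G)=1$ (otherwise $O_{5}(G)=R$ is normal), and $O_{3}(G)=1$ (a non-trivial normal $3$-subgroup acts unipotently in characteristic $3$, so has a non-zero fixed space, which is $G$-invariant and therefore all of $V$ by irreducibility of $R$, contradicting faithfulness). Thus $W:=O_{2}(G)=F(G)$ is non-trivial and $C_{G}(W)\le W$. A Clifford-theory argument applied to $BR$, for $B$ a normal abelian $2$-subgroup of $G$ — the number of $B$-homogeneous components of $V$ divides $|R|=5$ and is at most $\dim V=4$, hence is $1$, so $B$ has a faithful irreducible module, is cyclic, is centralized by $R$, and so lies in the cyclic group $C_{\Sp(4,3)}(R)$ of order $q^{m}+1=10$ furnished by Lemma~\ref{symplectic_centralizer} — shows that the only normal abelian $2$-subgroups of $G$ are $1$ and $\langle -I\rangle$; in particular $-I\in G$ and $\langle -I\rangle\le W$. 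Since $R\not\le W\ge C_{G}(W)$, $R$ does not centralize $W$; and $[W,R]\le\langle -I\rangle$ would force $R$ to centralize $W$ (each $[w,r]$ is central with $[w,r]^{2}=1$, so $1=[w,r^{5}]=[w,r]^{5}=[w,r]$). Hence $R$ acts non-trivially on $\bar W:=W/\langle -I\rangle$, so non-trivially on $\bar W/\Phi(\bar W)\cong C_{2}^{k}$; this gives a non-trivial homomorphism $R\to\GL(k,2)$, forcing $5\mid|\GL(k,2)|$, hence $k\ge 4$ and $|W|\ge 32$.

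The geometric heart of the argument is the Siegel parabolic. For $U\in\Omega$ the stabilizer $P_{U}$ of $U$ in $\Sp(4,3)$ has order $1296=2^{4}\cdot 3^{4}$, with unipotent radical $Q_{U}$ of order $27$ and $P_{U}/Q_{U}\cong\GL(2,3)$. As $W_{U}:=W\cap P_{U}$ is a $2$-group and $Q_{U}$ a $3$-group, $W_{U}\cap Q_{U}=1$, so $W_{U}$ embeds in $\GL(2,3)$ and $|W_{U}|\le 16$. Since $W\trianglelefteq G$ and $G$ is transitive on $\Omega$, the $W$-orbits on $\Omega$ all have a common size $d$ dividing $\gcd(10,|W|)$, so $d\in\{1,2\}$ and $|W|=d\,|W_{U}|\le 32$. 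Combining, $|W|=32$, $d=2$, $|W_{U}|=16$, and $\bar W$ has order $16$ equal to its own Frattini quotient, hence is elementary abelian; $W$ is non-abelian (its only $G$-normal abelian subgroups being $1$ and $\langle -I\rangle$), so $W$ is extra-special of order $32$ and thus has exponent $4$. But $|W_{U}|=16$ is the full $2$-part of $|\GL(2,3)|=48$, so $W_{U}$ maps onto a Sylow $2$-subgroup of $\GL(2,3)$; since $\GL(2,3)$ contains a cyclic subgroup of order $q^{2}-1=8$, that Sylow subgroup — hence $W_{U}$ — contains an element of order $8$, contradicting $W_{U}\le W$ and $\exp W=4$. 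Therefore $R\trianglelefteq G$, as claimed.

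The step I expect to be the main obstacle is exactly the one where the transitive action is essential: without the spread one only gets $|W|\ge 32$ for $W=O_{2}(G)$, and it is the block-size dichotomy $d\in\{1,2\}$ on $10$ points, fed into the parabolic embedding $W_{U}\hookrightarrow\GL(2,3)$, that simultaneously caps $|W|$ at $32$ (pinning $W$ down as an extra-special group of exponent $4$) and produces the impossible point-stabilizer $W_{U}$ of order $16$ inside $\GL(2,3)$. Making the Fitting-subgroup analysis and the parabolic computation interlock cleanly is where the care is needed.
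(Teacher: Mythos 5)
Your proof is correct, and although it terminates in the same contradiction as the paper's --- a $2$-group of order $16$ stabilizing a Lagrangian embeds as a full Sylow $2$-subgroup of $\GL(2,3)$, which contains elements of order $8$, while sitting inside an extra-special group of exponent $4$ --- the route there differs in two genuine ways. First, the paper must eliminate the prime $3$ from $|G|$ (via a Hall $\{3,5\}$-subgroup and Corollary \ref{normalizer_of_r_subgroup}) so that the full point stabilizer $\Gamma$ becomes a $2$-group of order $2^{a-1}\geq 16$; you sidestep this entirely by working with $W=O_2(G)=F(G)$ and its point stabilizer $W_U$, which is automatically a $2$-group, so the prime $3$ never needs to be controlled. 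Second, the paper imports the normal extra-special subgroup of order $32$ wholesale from the proof of Theorem \ref{solvable_subgroup_of_the_symplectic_group}, whereas you rederive the lower bound $|W|\geq 32$ independently ($C_G(W)\leq W$, the commutator argument showing $[W,R]\leq\langle -I\rangle$ forces centralization, and $5\mid|\GL(k,2)|$ forces $k\geq 4$) and then extract the extra-special structure from the equality $|W|=32$; your upper bound comes from the block system of $W$-orbits on the $10$ spread members ($d\in\{1,2\}$), where the paper instead counts fixed points of the $2$-group $\Gamma$ modulo $2$. The one input you use that the paper does not develop is the parabolic structure $P_U=Q_U\rtimes\GL(2,3)$ with $|Q_U|=27$; this is standard, but if you prefer to stay inside the paper's toolkit, the embedding $W_U\hookrightarrow\GL(2,3)$ follows exactly as in the paper's treatment of $\Gamma$: the $2$-group $W_U$ fixes an even number of the ten spread members, hence a second Lagrangian $U'$ with $V=U\oplus U'$, and the dual-basis computation shows $W_U$ acts faithfully on $U$. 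Both arguments deliver the same theorem; yours is somewhat more self-contained on the Fitting-subgroup side and avoids the arithmetic bookkeeping on $|G|$.
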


\begin{proof}
In view of Theorem \ref{solvable_subgroup_of_the_symplectic_group}, we have only to consider the case that $q=3$, $m=2$ and $r=5$. We suppose that a Sylow 5-subgroup of $G$ is not normal in $G$ and then derive a contradiction.

Our proof of Theorem \ref{solvable_subgroup_of_the_symplectic_group} shows that in these circumstances $G$ contains a normal extra-special 2-subgroup of order 32. It follows that $|G|=2^a\cdot 3^b\cdot 5$, where $a\geq 5$. We will now show that $b=0$ and thus $G$ is a $\{2,5\}$-group.

Let $H$ be a Hall  $\{3,5\}$-subgroup of $G$. Since a Sylow 5-subgroup of $G$ acts irreducibly on $V$, it follows that $H$ also acts irreducibly on $V$. Thus, by earlier arguments, $O_3(H)=1$, and we must have $O_5(H)>1$. This implies that a Sylow 5-subgroup, $R$, say, of $H$ is normal in $H$ and thus $H\leq N_S(R)$, where $S=\Sp(4,3)$. Corollary \ref{normalizer_of_r_subgroup} implies that $|N_S(R)|=40$ and we deduce that $|H|=5$. This establishes that $|G|=2^a\cdot 5$, where $a\geq 5$.

Let $U$ be a 2-dimensional totally isotropic subspace in $\Omega$ and let $\Gamma$ be its stabilizer subgroup in $G$. We have then $|G:\Gamma|=10$, since $G$ acts transitively on
$\Omega$. Thus $|\Gamma|=2^{a-1}\geq 16$. 

Since $\Gamma$ is a 2-group, it fixes an even number of elements of $\Omega$. Thus, there is some other subspace $W$, say, contained in $\Omega$ that is also fixed by $\Gamma$. We have then
$V=U\oplus W$, since $U\cap W=0$. Now $\Gamma$ acts on $U$, and we claim that this action is faithful. 

For suppose that some element $\sigma$ of $\Gamma$ acts trivially on $U$. Since $U$ and $W$ are both totally isotropic, there exist bases $\{u_1, u_2\}$ of $U$ and $\{w_1, w_2\}$ of $W$ such that 
\[
f(u_1,w_1)=f(u_2, w_2)=1, \quad f(u_1,w_2)=f(u_2, w_1)=0.
\]
If we now use the fact that $\sigma$ is an isometry of $f$ that fixes $u_1$ and $u_2$, and maps $W$ into itself, we see that $\sigma$ also fixes $w_1$ and $w_2$, and therefore $\sigma$ is the identity. This proves our claim.

It follows that $\Gamma$ is isomorphic to a subgroup of $\GL(2,3)$ and thus has order dividing $|\GL(2,3)|=48$. As we have already shown that $|\Gamma|\geq 16$, we deduce that $|\Gamma|=16$. Therefore, $\Gamma$ is a Sylow 2-subgroup 
of $\GL(2,3)$ and hence a semi-dihedral group of exponent 8. On the other hand, we now know that a Sylow 2-subgroup of $G$ has order 32, since the 2-part of $|G|$ is $2|\Gamma|$, and is therefore extra-special of exponent 4.
This contradicts the fact that $\Gamma$ has exponent 8. We conclude that a Sylow 5-subgroup of $G$ is normal and the proof is complete.
\end{proof}

It is worth mentioning that when $q\equiv 3\bmod 4$, the group $G$ described before Lemma \ref{structure_of_G} does act transitively on a complete symplectic spread defined on $V$.

\end{document}